\documentclass[12pt,reqno]{article}
\usepackage{amssymb}
\usepackage[mathscr]{eucal}
\usepackage[mathscr]{eucal}
\usepackage{amsmath,amssymb,latexsym,theorem,enumerate,ifthen}
\usepackage{orcidlink}
 \usepackage{hyperref} 

\newcommand{\DD}{\mathbb{D}}

\newcommand{\NN}{\mathbb{N}}

\newcommand{\RR}{\mathbb{R}}

\newcommand{\ZZ}{\mathbb{Z}}

\newcommand{\be}{{\boldsymbol{e}}}

\newcommand{\cA}{{\mathcal A}}
\newcommand{\cB}{{\mathcal B}}

\newcommand{\ee}{\mathrm{e}}

\newcommand{\EE}{\operatorname{\mathbb{E}}}

\newcommand{\PP}{\operatorname{\mathbb{P}}}

\newcommand{\cov}{\operatorname{Cov}}
\newcommand{\corr}{\operatorname{Corr}}
\newcommand{\sign}{\operatorname{sign}}

\newcommand{\comment}[1]{}

\renewcommand{\leq}{\leqslant}
\renewcommand{\geq}{\geqslant}

\newcommand{\proofend}{\hfill\mbox{$\Box$}}

\numberwithin{equation}{section}

\theoremstyle{change} \theorembodyfont{\em}
\newtheorem{Lem}{Lemma.}[section]
\newtheorem{Thm}[Lem]{Theorem.}
\newtheorem{Pro}[Lem]{Proposition.}

\newtheorem{Cor}[Lem]{Corollary.}

\theorembodyfont{\rm}
\newtheorem{Def}[Lem]{Definition.}
\newtheorem{Rem}[Lem]{Remark.}

\newtheorem{Ex}[Lem]{Example.}

\def\OnlyOnArXiv#1#2{\ifthenelse{\equal{#1}{Y}}{#2}{}}

\def\eq#1{{\rm(\ref{#1})}}
\long\def\Eq#1#2{\ifthenelse{\equal{#1}{*}}
  {\begin{equation*}\begin{aligned}#2\end{aligned}\end{equation*}}
  {\begin{equation}\begin{aligned}\label{#1}#2\end{aligned}\end{equation}}}

\newenvironment{proof}{\noindent{\bf Proof.}}{\proofend}

\begin{document}

\begin{center}
 {\bfseries\Large Expected loss of quasi-arithmetic means\\[1mm] of exchangeable random variables}

\vspace*{3mm}

{\sc\large
  M\'aty\'as $\text{Barczy}^{*,\diamond,
  \orcidlink{0000-0003-3119-7953}}$,
  Zsolt $\text{P\'ales}^{**,\orcidlink{0000-0003-2382-6035}}$ }

\end{center}

\vskip0.2cm

\noindent
 * HUN-REN–SZTE Analysis and Applications Research Group,
   Bolyai Institute, University of Szeged,
   Aradi v\'ertan\'uk tere 1, H--6720 Szeged, Hungary.

\noindent
 ** Institute of Mathematics, University of Debrecen,
    Pf.~400, H--4002 Debrecen, Hungary.

\noindent E-mails: barczy@math.u-szeged.hu (M. Barczy),
                  pales@science.unideb.hu  (Zs. P\'ales).

\noindent $\diamond$ Corresponding author.

\vskip0.2cm


{
\renewcommand{\thefootnote}{}
\footnote{\textit{2020 Mathematics Subject Classifications\/}: 26E60, 26A51, 60G09.}
\footnote{\textit{Key words and phrases\/}:
 quasi-arithmetic mean, convex function, loss function, error function, strong convexity, exchangeability.}
\vspace*{0.2cm}
}

\vspace*{-10mm}

\begin{abstract}
The purpose of this paper is to establish certain relationships between the theories of means (aggregation functions) and loss functions.
Namely, we derive sufficient conditions under which the sequence of the expected losses of a quasi-arithmetic mean
of the first $n$ members from a sequence of exchangeable random variables is (strictly) decreasing with respect to $n$.
Some new properties of functions which are strongly convex with respect to a nonnegative even error function, such as a Jensen-type inequality, are described. Additionally, non-trivial new examples for such functions are presented as well.
\end{abstract}


\section{Introduction}
\label{section_intro}

The theory of means, which form an essential subclass of aggregation functions, is an old and important field of classical analysis, and it has a large number of applications in every branch of mathematics. For a recent survey on this matter, see Beliakov et al.\ \cite{BelBusCal16}.
In particular, it has applications in insurance mathematics and in the theory of utility functions as well.
Among others, Chuzdiak \cite{Chu} showed that the zero utility principle in insurance mathematics is a particular case of the so-called quasi-deviation means, which enabled him to characterize some properties of this principle such as comparison, equality or subadditivity.
Dealing with various aspects of buying and selling prices for lotteries, Chudziak and Chudziak \cite{ChuChu} showed that the concepts of the so-called willingness to accept and willingness to pay can also be described in terms of quasi-deviation means.

In the present paper, we establish some connections between the theories of means and loss functions, and further, our paper connects them with the statistical prediction theory, as well.
In statistics, one can combine the predictions of several base models using means (aggregation functions). In particular, the arithmetic mean, which is one of the simplest means, is frequently used.
The heuristic idea behind aggregation is that groups are collectively wiser than individuals, and there are several phrases addressing this idea: ''wisdom of crowds'', ''vox populi'', ''miracle of aggregation'' or ''two heads are better than one''.
In our paper, roughly speaking, we investigate certain relationships between (strongly) convex loss functions and predictions based on the quasi-arithmetic means of exchangeable random variables.
(Note that arithmetic mean mentioned before is a special quasi-arithmetic mean.)
In the subsequent paragraphs, we introduce some notations, the notion of quasi-arithmetic means, and then, after Definition \ref{Def_quasi_arithmetic}, we formulate the problem presented above in mathematical terms.

Throughout this paper, let $\NN$, $\ZZ_+$, $\RR$ and $\RR_+$ denote the sets of positive integers, non-negative integers, real numbers and  nonnegative real numbers, respectively.
An interval $I\subseteq\RR$ will be called nondegenerate if it contains at least two distinct points.
Given a nondegenerate interval $I\subseteq\RR$, the difference $I-I$ stands for the set $\{x-y : x,y\in I\}$.
To avoid misunderstandings, in some cases we write $(I-I)$ instead of $I-I$.

Next, we recall the notion of quasi-arithmetic means.

\begin{Def}[Quasi-arithmetic mean]\label{Def_quasi_arithmetic}
Let $n\in\NN$, let $I\subseteq \RR$ be a nonempty open interval, and let $f:I\to\RR$ be a continuous and strictly increasing function.
The $n$-variable quasi-arithmetic mean $\mathscr{A}_n^{f}:I^n\to I$ is defined by
 \[
  \mathscr{A}_n^{f}(x_1,\ldots,x_n):= f^{-1} \bigg( \frac{1}{n} \sum_{i=1}^n f(x_i)\bigg), \qquad x_1,\ldots, x_n\in I,
 \]
where $f^{-1}$ denotes the inverse of $f$. The function $f$ is called the generator of $\mathscr{A}_n^{f}$.
\end{Def}

The arithmetic, geometric and harmonic means are quasi-arithmetic means corresponding to the generator \ $f:\RR\to \RR$, $f(x):=x$, $x\in\RR$;
 \ $f:(0,\infty)\to \RR$, \ $f(x):=\ln(x)$, \ $x>0$; \ and \ \ $f:(0,\infty)\to \RR$, \ $f(x) =x^{-1}$, \ $x>0$, \ respectively.

We will study the following problem.
Given a nonempty open interval $I\subseteq \RR$, a continuous and strictly increasing function $f:I\to\RR$, a (loss) function $L:I\to\RR$, and a sequence of random variables $(\xi_\ell)_{\ell\in\NN}$ 
 with values in $I$, we aim to establish conditions under which the inequality
 \[
    \EE\big( L( \mathscr{A}_n^{f}(\xi_1,\ldots,\xi_n)\big) \leq \EE\big( L(\mathscr{A}_{n-1}^{f}(\xi_1,\ldots,\xi_{n-1})) \big)
 \]
 holds for each $n\geq 2$, $n\in\NN$.
This problem has been motivated by a corresponding result on the arithmetic means of exchangeable random variables
and convex loss functions due to Marshall and Proschan \cite{MarPro} (see also Marshall et al.\ \cite[Proposition B.2.b on page 395]{MarOlk}), for more details, see Section \ref{Sec_convex_loss}.
Mattei and Garreau \cite[Section 6.2]{MatGar} initiated to consider other means than the arithmetic mean.

In Section \ref{Sec_convex}, the notions and some results on convex functions, strongly convex functions with nonnegative modulus, and
strongly convex functions with respect to a nonnegative even error function are recalled, see Definitions \ref{Def_convexity}, \ref{Def_strongly_convex_3} and \ref{Def_strongly_convex_2}, Theorem \ref{Thm_strongly_convex}, Proposition \ref{Pro_basic_properties}, Theorem \ref{Thm_MakNikPal} and Corollary \ref{Cor_MakNikPal1}, respectively.
In Proposition \ref{Prop_diff_strongly_convex}, we establish a particular case of part (ii) of Theorem 8 in Mak\'o et al.\ \cite{MakNikPal}, namely, we give a characterisation of differentiable strongly convex functions
 with respect to a nonnegative even error function.
Example \ref{Ex_power_strongly_convex} is devoted to present a nontrivial class of strongly convex functions
 with respect to the error function $\RR\ni t\mapsto \vert t\vert^p$, where $p\in[2,\infty)$, namely,
 the class of functions $\big\{ (0,\infty) \ni x \mapsto x^p : p\in[2,\infty)\big\}$.
In Example \ref{Ex_power_strongly_convex_2}, we give another nontrivial example for a strongly convex function with respect to a nonnegative even error function.
Remark \ref{Rem_starlikeness} is devoted to establish some properties of the function defined in \eqref{def_alphahat}, which plays an important role in the theory of strongly convex functions with respect to a nonnegative even error function, and this function turns out to be so-called starlike with respect to the point $0$.
Further, we prove a Jensen-type inequality for strongly convex functions with respect to a nonnegative even error function, see Corollary \ref{Cor_strong_alpha_convex}.
 
In Section \ref{Sec_Mat_Gar}, we recall the notion and some basic properties of exchangeability of random variables,
see Definition \ref{Def_Exch} and Proposition \ref{Pro1}.
Furthermore, motivated by an example due to Mattei and Garreau \cite[Section 3.3]{MatGar}, we also present an example in which we demonstrate that the convex loss of a forecasting based on exchangeable random variables with non-equal weights may get worse in average compared to the one with equal weights (in fact, our example is also a generalization of the above mentioned one of Mattei and Garreau \cite[Section 3.3]{MatGar}), see Example \ref{Ex_MatGar}.

In Section \ref{Sec_convex_loss}, first, we recall a result on arithmetic means of exchangeable random variables and convex loss functions due to Marshall and Proschan \cite{MarPro} (see also Marshall et al.\ \cite[Proposition B.2.b, page 395]{MarOlk} or Theorem \ref{Thm_Mat_Gar_1}).
In Theorem \ref{Thm_monotonicity}, we present a consequence of this result for quasi-arithmetic means. 
Roughly speaking, Theorem \ref{Thm_monotonicity} states that, given a nonempty open interval $I$, 
 a continuous and strictly increasing function $f$ on $I$, and a loss function $L$ on $I$ such that $L\circ f^{-1}$ is convex on $f(I)$, 
 the sequence of the expected losses of the quasi-arithmetic means of the first $n$ terms $\xi_1,\ldots,\xi_n$ of an exchangeable sequence of random variables 
 $(\xi_n)_{n\in\NN}$ with values in $I$ is decreasing with respect to $n$, where the quasi-arithmetic mean in question has generator $f$.
Under finiteness and positiveness of the variance of $f(\xi_1)$, in Theorem \ref{Thm_strict_monotonicity} and in Remark \ref{Rem_strongly_convex_mod_loss}, we generalize Theorem \ref{Thm_monotonicity} for strongly convex loss functions with a nonnegative modulus.
In this case, it turns out that the sequence of the expected losses in question modified by a certain correction term (depending on the modulus of the strongly convex function) is decreasing.
This result is also a generalization of Theorem 4 in dimension 1 of Mattei and Garreau \cite{MatGar} from arithmetic means to quasi-arithmetic means.
We close Section \ref{Sec_convex_loss} with Example \ref{Ex_Sect5}, in which we specialize Theorem \ref{Thm_strict_monotonicity} to the case of the geometric mean and a sequence of independent and identically 
 distributed (i.i.d.) random variables, and, in the second part of the example, we make a further specialization by choosing the uniform distribution on $(0,1)$ as the common distribution 
 of the i.i.d.\ sequence of random variables in question.
 
In Section \ref{Sec_convex_loss_3}, we obtain results that are certain extensions to those of in Section \ref{Sec_convex_loss}. Instead of strongly convex functions with a nonnegative modulus, we consider functions which are strongly convex functions with respect to a nonnegative even error function, see Theorem \ref{Thm_monotonicity_alpha}.
We remark that, in part (iii) of Theorem \ref{Thm_monotonicity_alpha}, we also provide a set of sufficient conditions under which the sequence $\big(\EE\big(L\big( \mathscr{A}_n^{f}(\xi_1,\ldots,\xi_n) \big)\big)\big)_{n\in\NN}$ is strictly decreasing.
In the proof of Theorem \ref{Thm_monotonicity_alpha}, we effectively use a Jensen-type inequality for strongly convex functions with respect to a nonnegative even error function (see Corollary \ref{Cor_strong_alpha_convex}).
Remarks \ref{Rem_condiii} and \ref{Rem_condiv_v} are devoted to shed some light on the assumptions in part (iii) 
 of Theorem \ref{Thm_monotonicity_alpha}.
In particular, we give an example of exchangeable (hence identically distributed) random variables $\xi_1,\xi_2$ and $\xi_3$ that are pairwise nonpositively correlated, but not independent.
In Corollary \ref{Cor_x4}, we specialize part (i) of Theorem  \ref{Thm_monotonicity_alpha} to the error function
 $\alpha(x):=x^4$, $x\in\RR$, and to a sequence of i.i.d.\ random variables. 
In this special case, the correction term in the inequality \eqref{help6} possesses a simpler form.
We close Section \ref{Sec_convex_loss_3} with Example \ref{Ex_Sect5_uj}, in which we specialize Corollary \ref{Cor_x4} to the case of geometric mean  and a sequence of i.i.d.\ random variables, and, in the second part of the example, we make a further specialization by choosing the uniform distribution on $(0,1)$ as the common distribution of the i.i.d.\ sequence of random variables in question.

\section{Preliminaries on convex and strongly convex functions}\label{Sec_convex}

First, we recall the notions of convex functions and strongly convex functions with a nonnegative modulus.

\begin{Def}\label{Def_convexity}
Let $I\subseteq \RR$ be a nondegenerate interval.
 A function $L:I\to\RR$ is called convex if
 \begin{align}\label{def_convex}
   L(tx+(1-t)y) \leq tL(x) + (1-t)L(y), \qquad t\in[0,1],\; x,y\in I.
 \end{align}
Further, $f$ is called strictly convex if strict inequality holds in \eqref{def_convex} for all $t\in(0,1)$ and $x\ne y$, $x,y\in I$.
\end{Def}

\begin{Def}\label{Def_strongly_convex_3}
Let $I\subseteq \RR$ be a nondegenerate interval and $\mu\in\RR_+$.
A function $L:I\to\RR$ is called strongly convex with modulus $\mu$ if
 \begin{align*}
  L(tx+(1-t)y) \leq t L(x) + (1-t)L(y) - \mu \cdot t (1-t) (x-y)^2,\qquad t\in[0,1], \; x,y\in I.
 \end{align*}
\end{Def}

For example, $L:\RR\to \RR$, $L(x):=x^2$, $x\in \RR$, is strongly convex with modulus $1$, since
 \begin{align}\label{convex_2}
   (tx+(1-t)y)^2 = tx^2 + (1-t)y^2 - t(1-t)(x-y)^2,\qquad t\in[0,1], \;\; x,y\in \RR.
 \end{align}
Note that if $L:I\to\RR$ is strongly convex with modulus $\mu\in\RR_+$, then $L$ is convex, and, 
 further, using \eqref{convex_2}, one can check that
 the function $\widetilde L: I\to\RR$ defined by $\widetilde L(x):=L(x) - \mu x^2$, $x\in I$, is convex as well.
Further, it is known that, given a twice differentiable function $L:(a,b)\to\RR$, where $a<b$, $a,b\in\RR$, we have that $L$ is strongly convex with modulus $\mu\in\RR_+$ if and only if $L''(x)\geq 2\mu$, $x\in(a,b)$, see, e.g., Roberts and Varberg \cite[page 268]{RobVar}. 
As a consequence, for example, if $\mu\in\RR_+$, $\kappa,\delta\in\RR$ and $L:\RR\to\RR$, $L(x):=\mu x^2+ \kappa x+\delta$, $x\in\RR$, then $L$ is strongly convex with modulus $\mu$, since $L''(x)=2 \mu$, $x\in\RR$. 
One can also see that the function $L=\cosh$ is strongly convex over $\RR$ with modulus $1/2$, since $\cosh''=\cosh\geq 1$ over $\RR$. The function $L=-\cos$ is convex over the interval $(-\pi/2,\pi/2)$, but it is not strongly convex with any positive modulus over this interval. On the other hand, if $-\pi/2<a<b<\pi/2$, then $L=-\cos$ is strongly convex over $(a,b)$ with modulus $\tfrac12\min(\cos(a),\cos(b))$, since 
$(-\cos)''(x)=\cos(x)\geq\min(\cos(a),\cos(b))$ for $x\in(a,b)$.

For strongly convex functions with a nonnegative modulus, the following strengthening of Jensen's inequality holds, see, e.g., Nikodem \cite[Theorem 2]{Nik}.

\begin{Thm}\label{Thm_strongly_convex}
Let $I\subseteq \RR$ be a nondegenerate interval, $\mu\in\RR_+$, and $L: I\to\RR$ be a strongly convex function with modulus $\mu$.
Then, for each $n\in\NN$ and $x_1,\ldots,x_n\in I$, we have that
 \begin{align*}
  L\left( \frac{1}{n}\sum_{k=1}^n x_k \right)
     \leq \frac{1}{n}\sum_{k=1}^n L(x_k)
          - \frac{\mu}{n} \sum_{k=1}^n \left(x_k - \frac{1}{n}\sum_{j=1}^n x_j \right)^2.
 \end{align*}
\end{Thm}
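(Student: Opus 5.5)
The plan is to reduce the statement to the ordinary Jensen inequality applied to the auxiliary function $\widetilde L(x):=L(x)-\mu x^2$, $x\in I$. As noted right after Definition \ref{Def_strongly_convex_3}, the identity \eqref{convex_2} shows that $\widetilde L$ is convex on $I$. Indeed, expanding $\widetilde L(tx+(1-t)y)$ and using \eqref{convex_2} for the quadratic part, the term $-\mu t(1-t)(x-y)^2$ cancels exactly. So the first step is to record this convexity of $\widetilde L$.

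Next, I will apply Jensen's inequality for convex functions with equal weights $1/n$ to $\widetilde L$. This inequality follows from \eqref{def_convex} by a standard induction on $n$. Writing $\bar x:=\frac1n\sum_{j=1}^n x_j\in I$, it gives
\[
 L(\bar x)-\mu\bar x^2\leq \frac1n\sum_{k=1}^n L(x_k)-\frac{\mu}{n}\sum_{k=1}^n x_k^2 .
\]

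The remaining step is the variance identity
\[
 \frac1n\sum_{k=1}^n x_k^2-\bar x^2=\frac1n\sum_{k=1}^n (x_k-\bar x)^2 ,
\]
which holds because the cross term $-\frac{2\bar x}{n}\sum_k x_k$ equals $-2\bar x^2$. Substituting this identity and rearranging yields exactly the claimed inequality.

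I do not see a genuine obstacle. The only point requiring care is the induction behind the finite Jensen inequality for $\widetilde L$, where one uses that $I$ is an interval so that all intermediate convex combinations stay in $I$. An alternative is to run an induction directly on the strongly convex inequality, splitting off $x_n$ with weight $1/n$. Tracking the quadratic correction terms that way is messier, so I prefer the reduction to $\widetilde L$.
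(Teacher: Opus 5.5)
Your proof is correct. By \eqref{convex_2}, $\widetilde L=L-\mu x^2$ is convex. Equal-weight Jensen for $\widetilde L$, combined with $\frac1n\sum_k x_k^2-\bar x^2=\frac1n\sum_k(x_k-\bar x)^2$, gives exactly the claim.

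The paper does not prove this theorem itself; it quotes Nikodem. Its own argument for the more general Corollary \ref{Cor_strong_alpha_convex}, which contains the statement as the case $\alpha(u)=\mu u^2$, $t_i=1/n$, runs differently. It takes the support inequality $L(x_i)\geq L(x_0)+c_{x_0}(x_i-x_0)+\alpha(x_i-x_0)$ from Theorem \ref{Thm_MakNikPal} at $x_0=\sum_i t_ix_i$ and averages these inequalities, so that the linear terms cancel.

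Your route has three advantages:
\begin{itemize}
\item It is more elementary, using only ordinary Jensen rather than the Mak\'o--Nikodem--P\'ales characterization.
\item It extends verbatim to arbitrary convex weights through $\sum_k t_kx_k^2-\bar x^2=\sum_k t_k(x_k-\bar x)^2$, with $\bar x=\sum_k t_kx_k$.
\item It covers every nondegenerate interval, as in the statement, whereas Theorem \ref{Thm_MakNikPal} and Corollary \ref{Cor_strong_alpha_convex} are formulated for open intervals only.
\end{itemize}

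What it cannot do is leave the quadratic setting. It hinges on the exact identity \eqref{convex_2}, which has no analogue for a general even error function $\alpha$. The support-function argument is what lets the paper handle strongly $\alpha$-convex losses in Theorem \ref{Thm_monotonicity_alpha}.
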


The notion of strong convexity with modulus $\mu\in\RR_+$ was generalized by Mak\'o et al.\ \cite{MakNikPal} as follows.

\begin{Def}\label{Def_strongly_convex_2}
Let $I\subseteq \RR$ be a nondegenerate interval, and $\alpha:(I-I)\to\RR_+$ be an even function.
A function $L:I\to\RR$ is said to be strongly convex with respect to the error function $\alpha$, briefly strongly $\alpha$-convex, if
 \begin{align}\label{help_strongly_alpha_convex_def}
  \begin{split}
  L(tx+(1-t)y)&\leq  tL(x)+(1-t)L(y)\\
              &\phantom{\leq\;}- t\alpha((1-t)(x-y))-(1-t)\alpha(t(y-x)),\qquad x,y\in I,\,t\in[0,1].
 \end{split}
 \end{align}
\end{Def}

Observe that if  $I\subseteq \RR$ is a nondegenerate interval and $\alpha:(I-I)\to\RR_+$ is defined by
$\alpha(u):=\mu u^2$, $u\in (I-I)$, where $\mu\in\RR_+$, then the notion of strong $\alpha$-convexity reduces to strong convexity with modulus $\mu\in\RR_+$. 
We note that, for certain error functions $\alpha$, the class of strongly $\alpha$-convex functions may be empty.
More precisely, if the necessary conditions in part (i) of the forthcoming Corollary 
 \ref{Cor_MakNikPal1} do not hold for $\alpha$, then there does not exist a strongly $\alpha$-convex function.
According to our knowledge, a satisfactory characterisation of those error functions $\alpha$ for which a strongly $\alpha$-convex function exists is not known.

In the next proposition, we collect some of the properties related to strong $\alpha$-convexity, that readily follow from Definition \ref{Def_strongly_convex_2}.

\begin{Pro}\label{Pro_basic_properties}
Let $I\subseteq \RR$ be a nondegenerate interval.
\begin{enumerate}[(i)]
 \item If $L_i:I\to\RR$ is strongly $\alpha_i$-convex for $i=1,2$, where $\alpha_1,\alpha_2:(I-I)\to\RR_+$ are even error functions
       and $c_1,c_2\in\RR_+$, then $c_1L_1+c_2L_2$ is strongly $(c_1\alpha_1+c_2\alpha_2)$-convex.
 \item If $\alpha_1,\alpha_2:(I-I)\to\RR_+$ are even error functions with $\alpha_1\leq\alpha_2$ and $L:I\to\RR$ is strongly $\alpha_2$-convex, then $L$ is strongly $\alpha_1$-convex as well.
 \item If, for each $n\in\NN$, $L_n:I\to\RR$ is strongly $\alpha_n$-convex, where $\alpha_n:(I-I)\to\RR_+$ is an even error function,
 and the sequences $(L_n)_{n\in\NN}$ and $ (\alpha_n)_{n\in\NN}$ pointwise converge to $L:I\to\RR$ and to $\alpha:(I-I)\to\RR_+$ as $n\to\infty$, respectively, then $L$ is strongly $\alpha$-convex.
\end{enumerate}
\end{Pro}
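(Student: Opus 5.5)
All three parts will be checked directly against inequality \eqref{help_strongly_alpha_convex_def}, for fixed $x,y\in I$ and $t\in[0,1]$. To shorten notation, for an even error function $\beta:(I-I)\to\RR_+$ we put
\[
 E_\beta(x,y,t):=t\beta((1-t)(x-y))+(1-t)\beta(t(y-x)).
\]
Then strong $\beta$-convexity of $L$ reads
\[
 L(tx+(1-t)y)\leq tL(x)+(1-t)L(y)-E_\beta(x,y,t).
\]
The points $(1-t)(x-y)$ and $t(y-x)$ lie in $I-I$. Indeed, since $I$ is an interval, $z:=tx+(1-t)y\in I$, and these points equal $x-z$ and $y-z$, respectively. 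The key observation is that $\beta\mapsto E_\beta(x,y,t)$ is linear in $\beta$, monotone in $\beta$ because $t,1-t\geq 0$, and continuous under pointwise convergence, since it involves only two point evaluations of $\beta$.

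For part (i), we multiply the inequality for $L_i$ by $c_i\geq0$ (nonnegativity keeps the direction of the inequality) and add the two resulting inequalities. Linearity of $E$ in $\beta$ gives
\[
 c_1E_{\alpha_1}+c_2E_{\alpha_2}=E_{c_1\alpha_1+c_2\alpha_2}.
\]
We also note that $c_1\alpha_1+c_2\alpha_2$ is again even and nonnegative, so it is an admissible error function.

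For part (ii), $\alpha_1\leq\alpha_2$ pointwise and $t,1-t\geq0$ give $E_{\alpha_1}\leq E_{\alpha_2}$. Therefore
\[
 -E_{\alpha_2}\leq -E_{\alpha_1},
\]
and the inequality for $L$ with $\alpha_2$ implies the one with $\alpha_1$.

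For part (iii), we write down the inequality for $L_n$ and $\alpha_n$ at the fixed triple $(x,y,t)$ and let $n\to\infty$. Each term involves evaluations at finitely many fixed points: $L_n$ at $tx+(1-t)y$, $x$ and $y$, and $\alpha_n$ at $(1-t)(x-y)$ and $t(y-x)$. Pointwise convergence therefore passes to the limit in every term, and non-strict inequalities are preserved under limits. We also check that the limit $\alpha$ is even and nonnegative, being a pointwise limit of such functions.

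There is no real obstacle here. The only points needing care are the domain membership of the arguments of $\alpha$ and the admissibility of the resulting error functions, both of which are handled above.
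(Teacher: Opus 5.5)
Your proposal is correct and matches the paper's approach. The paper gives no written proof, saying only that these properties readily follow from Definition~\ref{Def_strongly_convex_2}, and your argument (the error term is linear and monotone in the error function and depends only on its values at the two fixed points $(1-t)(x-y)$ and $t(y-x)$ of $I-I$) fills in exactly those details.
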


Next, we formulate a consequence of Theorem 8 in Mak\'o et al.\ \cite{MakNikPal}.

\begin{Thm}\label{Thm_MakNikPal}
Let $I\subseteq \RR$ be a nonempty open interval, $\alpha:(I-I)\to\RR_+$ be an even error function, and $L:I\to \RR$ be a function.
Then the following two assertions are equivalent:
 \begin{enumerate}
   \item[(i)] $L$ is strongly $\alpha$-convex, i.e., \eqref{help_strongly_alpha_convex_def} holds for all  $x,y\in I$ and $t\in[0,1]$.
   \item[(ii)] For all $x_0\in I$, there exists a constant $c_{x_0}\in\RR$ such that
         \begin{align}\label{help19}
           L(x)\geq L(x_0) + c_{x_0}(x-x_0) + \alpha(x-x_0),\qquad x\in I.
         \end{align}
 \end{enumerate}
\end{Thm}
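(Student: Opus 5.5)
We prove the two implications separately. Throughout, the evenness of $\alpha$ lets us rewrite the correction terms in \eqref{help_strongly_alpha_convex_def}: writing $z:=tx+(1-t)y$, we have $x-z=(1-t)(x-y)$ and $y-z=t(y-x)$. So \eqref{help_strongly_alpha_convex_def} reads
\[
 L(z)\leq tL(x)+(1-t)L(y)-t\alpha(x-z)-(1-t)\alpha(y-z).
\]

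\emph{(ii)$\Rightarrow$(i).} This direction is routine. Fix $x,y\in I$, $t\in[0,1]$, and apply \eqref{help19} with $x_0:=z$, once at the point $x$ and once at the point $y$. Multiply the first inequality by $t$, the second by $1-t$, and add. The linear terms cancel because $t(x-z)+(1-t)(y-z)=0$. What remains is
\[
 tL(x)+(1-t)L(y)\geq L(z)+t\alpha(x-z)+(1-t)\alpha(y-z),
\]
which is exactly the rewritten form above.

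\emph{(i)$\Rightarrow$(ii).} Fix $x_0\in I$. For $x\neq x_0$ in $I$, define the modified slope
\[
 s(x):=\frac{L(x)-L(x_0)-\alpha(x-x_0)}{x-x_0}.
\]
We need a constant $c$ with $s(x)\leq c$ for $x<x_0$ and $s(x)\geq c$ for $x>x_0$. Such a $c$ exists as soon as $s(u)\leq s(v)$ for all $u<x_0<v$ in $I$. Both sets are nonempty because $I$ is open, and we then take $c:=\sup_{u<x_0}s(u)$. To check the separation, take $u<x_0<v$ and write $x_0=tu+(1-t)v$ with $t=(v-x_0)/(v-u)\in(0,1)$. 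Applying (i) in the rewritten form, with $x=u$ and $y=v$, gives
\[
 L(x_0)\leq tL(u)+(1-t)L(v)-t\alpha(u-x_0)-(1-t)\alpha(v-x_0).
\]
Substitute $t(x_0-u)=(1-t)(v-x_0)$ and divide by $(1-t)(v-x_0)>0$. After rearranging, this becomes
\[
 \frac{L(x_0)-L(u)+\alpha(u-x_0)}{x_0-u}\leq\frac{L(v)-L(x_0)-\alpha(v-x_0)}{v-x_0},
\]
which is precisely $s(u)\leq s(v)$. Then \eqref{help19} follows from the sign of $x-x_0$ in each case, with equality at $x=x_0$ because $\alpha(0)=0$.

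That last step is the main point needing care. Setting $x=y$ in \eqref{help_strongly_alpha_convex_def} gives $\alpha(0)\leq0$, and since $\alpha\geq0$ we get $\alpha(0)=0$.

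This is the standard supporting-line argument, with $\alpha$ absorbed into the slopes. Apart from $\alpha(0)=0$, the only delicate points are keeping the signs straight when dividing and noting that $c<\infty$, which holds because each $s(v)$ with $v>x_0$ bounds the supremum from above.
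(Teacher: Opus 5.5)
Your proof is correct and complete. The paper gives no argument of its own for this statement: it quotes it as a consequence of Theorem~8 of Mak\'o, Nikodem and P\'ales \cite{MakNikPal}, where the equivalence is obtained in a more general framework.

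Your route is a self-contained version of the classical supporting-line argument:
\begin{itemize}
\item[(ii)$\Rightarrow$(i):] you average the two support inequalities at $z=tx+(1-t)y$, and the linear terms cancel.
\item[(i)$\Rightarrow$(ii):] you show that the $\alpha$-corrected slopes satisfy $s(u)\le s(v)$ for all $u<x_0<v$. You then take $c_{x_0}=\sup_{u<x_0}s(u)$, which is finite, and you handle $x=x_0$ through $\alpha(0)=0$, obtained by setting $x=y$ in \eqref{help_strongly_alpha_convex_def}.
\end{itemize}
The computations check out, in particular $t(x_0-u)=(1-t)(v-x_0)$ and the sign bookkeeping when dividing by $x-x_0$.

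The citation buys brevity and places the result inside a broader theory. Your argument makes the statement verifiable within the paper itself, and it shows how little of the hypotheses is really used. The rewriting $x-z=(1-t)(x-y)$, $y-z=t(y-x)$ is an exact identity, so evenness of $\alpha$ is never used, contrary to your opening remark. Openness of $I$ is used only to guarantee points of $I$ on both sides of $x_0$.
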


As a consequence of Theorem \ref{Thm_MakNikPal}, we have that a strongly $\alpha$-convex function
 admits a so-called support function, which, roughly speaking,
 can be written as the sum of an affine function and a translate of the function $\alpha$.

We also specialize Corollaries 9 and 10 in Mak\'o et al.\ \cite{MakNikPal} to our setting.

\begin{Cor}\label{Cor_MakNikPal1}
Let $I\subseteq \RR$ be a nonempty open interval, $\alpha:(I-I)\to\RR_+$ be an even error function,
 and $L:I\to \RR$ be a strongly $\alpha$-convex function.
Then 
 \begin{itemize}
    \item[(i)] $\alpha(0)=0$ and
     \[
        \lim_{t\to 0} \frac{\alpha(t)}{t} =0,
    \]
    \item[(ii)] $L$ is a strongly $\widehat\alpha$-convex function as well, 
       where $\widehat\alpha:(I-I)\to\RR_+$ is defined by
       \begin{align}\label{def_alphahat}
        \widehat\alpha(u):=\sup\left\{  \frac{\alpha(tu)}{t} : t\in(0,1] \right\}, \qquad u\in (I-I).
      \end{align}
  \end{itemize}    
\end{Cor}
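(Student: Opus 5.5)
The plan is to derive both parts from the support-type characterisation in Theorem \ref{Thm_MakNikPal}, or directly from the defining inequality \eqref{help_strongly_alpha_convex_def}.

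\emph{Part (i).} First take $x=y$ in \eqref{help_strongly_alpha_convex_def}. This gives $L(x)\leq L(x)-t\alpha(0)-(1-t)\alpha(0)=L(x)-\alpha(0)$, so $\alpha(0)\leq 0$, and since $\alpha\geq 0$ we get $\alpha(0)=0$. For the limit, fix $x_0\in I$ and use \eqref{help19} at $x_0$ with $x=x_0+h$, where $h$ is small enough that $x_0\pm h\in I$. This gives
\begin{align*}
0\leq\alpha(h)\leq L(x_0+h)-L(x_0)-c_{x_0}h .
\end{align*}
Applying the same bound at $-h$ and using that $\alpha$ is even, we get
\begin{align*}
2\alpha(h)\leq L(x_0+h)+L(x_0-h)-2L(x_0).
\end{align*}
This alone does not give $o(h)$, so I need one-sided derivatives. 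Since $L$ is convex (the $\alpha$-terms are nonnegative), $L$ has finite one-sided derivatives at every interior point, and $h\mapsto (L(x_0+h)-L(x_0))/h$ is monotone.

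The subgradient $c_{x_0}$ satisfies $L'_-(x_0)\leq c_{x_0}\leq L'_+(x_0)$. Hence for $h>0$,
\begin{align*}
0\leq \frac{\alpha(h)}{h}\leq \frac{L(x_0+h)-L(x_0)}{h}-c_{x_0}\to L'_+(x_0)-c_{x_0}.
\end{align*}
So the limit exists only as an upper bound, and I must show it equals zero. The key step, which I expect to be the main obstacle, is to show $L'_-(x_0)=L'_+(x_0)$ (at least for a suitable $x_0$) by exploiting the $\alpha$-term more cleverly.

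My approach is as follows. Suppose $\limsup_{h\to0+}\alpha(h)/h=\beta>0$. Apply \eqref{help19} at two points $x_0<x_1$ close together, with $h=x_1-x_0$. Adding the two inequalities gives
\begin{align*}
2\alpha(h)\leq (c_{x_1}-c_{x_0})h .
\end{align*}
Now subdivide $[a,b]\subset I$ into $N$ equal pieces and sum these inequalities. The right-hand side telescopes to at most $(c_b-c_a)(b-a)/N\cdot$const. On the left, evenness lets me use $\alpha((b-a)/N)$ each time, giving
\begin{align*}
2N\alpha\Bigl(\frac{b-a}{N}\Bigr)\leq (c_b-c_a)\frac{b-a}{N}.
\end{align*}
Hence $\alpha(\delta)/\delta\leq (c_b-c_a)\delta/(2(b-a)^2)\cdot$const, which tends to $0$ along $\delta=(b-a)/N$. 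Since $b-a$ may be varied over a small interval, every small $\delta$ has this form, and this yields the limit.

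\emph{Part (ii).} I verify condition (ii) of Theorem \ref{Thm_MakNikPal} for $\widehat\alpha$. Since $\widehat\alpha$ is even, nonnegative and finite (finiteness follows from the bound below), it is enough to show that, with the same $c_{x_0}$,
\begin{align*}
L(x)\geq L(x_0)+c_{x_0}(x-x_0)+\frac{\alpha(t(x-x_0))}{t}, \qquad t\in(0,1].
\end{align*}
Put $x_t:=x_0+t(x-x_0)\in I$. Apply \eqref{help19} at $x_0$ to the point $x_t$, and use convexity of $L$ on the segment in the form
\begin{align*}
L(x_t)\leq tL(x)+(1-t)L(x_0).
\end{align*}
Together these give
\begin{align*}
tL(x)+(1-t)L(x_0)\geq L(x_0)+c_{x_0}t(x-x_0)+\alpha(t(x-x_0)).
\end{align*}
Dividing by $t$ gives the claim, and taking the supremum over $t$ finishes the proof. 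As a by-product, $\widehat\alpha$ is finite.
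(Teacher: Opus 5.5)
The paper does not prove this corollary itself. It imports it from Corollaries 9 and 10 of \cite{MakNikPal}, so your argument stands as a self-contained derivation from Theorem \ref{Thm_MakNikPal}, and it is correct.

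Part (ii) is clean. Apply \eqref{help19} at $x_0$ to the point $x_0+t(x-x_0)$, use convexity of $L$, and divide by $t$. This gives $\alpha(t(x-x_0))/t\le L(x)-L(x_0)-c_{x_0}(x-x_0)$, which yields both the finiteness of $\widehat\alpha$ and the support inequality for $\widehat\alpha$. Since $\widehat\alpha$ inherits evenness and nonnegativity from $\alpha$, Theorem \ref{Thm_MakNikPal} then applies to it.

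In part (i) the telescoping argument works, but one step should be made explicit. When you vary $b$ to reach every small $\delta$, you need a bound on $c_b-c_a$ that is uniform in $b$. Your own two-point inequality $2\alpha(h)\le (c_{x_1}-c_{x_0})h$ supplies this, since it shows that $x\mapsto c_x$ is increasing. Fix $a<b_0<b_1$ in $I$. For $0<\delta\le b_1-b_0$, take $N:=\lceil (b_0-a)/\delta\rceil$ and $b:=a+N\delta\in[b_0,b_1]$. Then
\[
0\le \frac{\alpha(\delta)}{\delta}\le \frac{c_b-c_a}{2(b-a)}\,\delta\le \frac{c_{b_1}-c_a}{2(b_0-a)}\,\delta .
\]
Note that the correct denominator is $2(b-a)$, not $2(b-a)^2$. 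Negative arguments are then handled by evenness.

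With this detail filled in, your route proves more than the statement asks: it gives $\alpha(\delta)=O(\delta^2)$ as $\delta\to 0$, a sharper necessary condition on $\alpha$ than part (i). If you only want the stated limit, there is a shorter way to finish your first attempt. A convex function on an open interval is differentiable at all but countably many points. At such a point $x_0$ the only possible subgradient is $c_{x_0}=L'(x_0)$, so your first estimate $0\le\alpha(h)/h\le (L(x_0+h)-L(x_0))/h-c_{x_0}$ already tends to $0$, and no subdivision is needed.
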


In the subsequent remark, we collect some properties of the function $\widehat\alpha$ defined in \eqref{def_alphahat}.

\begin{Rem}\label{Rem_starlikeness}
(i).
In view of part (i) of Corollary \ref{Cor_MakNikPal1}, 
 we can see that the conditions formulated therein are necessary for the nonemptiness of the class of strongly $\alpha$-convex functions.
Concerning part (ii) of Corollary \ref{Cor_MakNikPal1}, note that, by the definition of $\widehat\alpha$, we have that
$\widehat\alpha\geq\alpha$ is valid on $(I-I)$, and therefore, strong $\widehat\alpha$-convexity is a strengthening of strong $\alpha$-convexity. 

(ii).
Let $I\subseteq \RR$ be a nonempty open interval, $\alpha:(I-I)\to\RR_+$ be an even function with $\alpha(0)=0$.
Let us consider the function $\widehat\alpha$ defined in \eqref{def_alphahat}.
If $u\in I-I$ and $v\in(0,1]$, then $vu\in I-I$ (since $I-I$ is an interval and $0,u\in I-I$ yielding that $vu=vu+(1-v)0\in I-I$) and it holds that
 \Eq{uv}{
   \widehat\alpha(vu)
   &=\sup\left\{  \frac{\alpha(tvu)}{t} : t\in(0,1] \right\}
   =v\sup\left\{  \frac{\alpha(tvu)}{tv} : t\in(0,1] \right\}\\
   &=v\sup\left\{  \frac{\alpha(su)}{s} : s\in(0,v] \right\}
   \leq v\sup\left\{  \frac{\alpha(su)}{s} : s\in(0,1] \right\}
   =v\widehat\alpha(u).
   }
   
Observe that the inequality \eqref{uv} is equivalent to the increasingness of the map 
\Eq{au/u}{
(I-I)\cap(0,\infty)\ni u\mapsto\frac{\widehat\alpha(u)}{u}.
}
Indeed, if $0<w<u$, where $u\in (I-I)$, then $w\in (I-I)$ (since $I-I$ is an interval and $0\in I-I$) and, with  $v:=w/u$, the inequality \eq{uv} yields that
$\widehat\alpha(w)\leq \frac{w}{u} \widehat\alpha(u)$, which shows the increasingness of the map \eq{au/u}.
Conversely, assume that the map \eq{au/u} is increasing and let $u\in I-I$ and $v\in(0,1]$. 
If $u>0$, then $0<vu\leq u$ and $vu\in (I-I)$,
 whence $\frac{\widehat\alpha(vu)}{vu}\leq \frac{\widehat\alpha(u)}{u}$ follows. 
This implies that \eq{uv} is valid in the case $u\in I-I$, $u>0$, and $v\in(0,1]$.
The case when $u=0$ is trivial because $\widehat\alpha(0)=0$  (following from $\alpha(0)=0$), and the case when $u<0$ follows using that $\widehat\alpha$ is even (due to that $\alpha$ is even).

Taking into account that $\widehat\alpha(0)=0$, \eq{uv} implies that
 \begin{align*}
    \widehat\alpha(vu + (1-v)0)
        \leq v\widehat\alpha(u) + (1-v)\widehat\alpha(0), \qquad u\in I-I,   \qquad v\in[0,1],
 \end{align*}
i.e., $\widehat\alpha$ is a so-called starlike function with respect to the point $0\in (I-I)$ according to the terminology of P\'ales \cite{Pal2018}. 
For historical fidelity, we mention that the notion of starlikeness with respect to a given point is a particular version of conditional convexity introduced and studied by Losonczi \cite{Los2000}.

If, in addition, $\alpha$ is starlike with respect to $0$, i.e., 
 \[
    \alpha(vu + (1-v)0)
        \leq v\alpha(u) + (1-v)\alpha(0), \qquad u\in I-I,   \qquad v\in[0,1],
 \]
 or, equivalently, $\alpha(vu)\leq v \alpha(u)$, $u\in I-I$, $v\in[0,1]$, holds (following from $\alpha(0)=0$),
 then $\widehat \alpha=\alpha$.
Indeed, on the one hand, we have $\widehat\alpha\geq \alpha$ (see part (i)), and, on the other hand, 
 \[
  \widehat\alpha(u)
  = \sup\left\{  \frac{\alpha(tu)}{t} : t\in(0,1] \right\}
  \leq \sup\left\{  \frac{t\alpha(u)}{t} : t\in(0,1] \right\} =\alpha(u), \qquad u\in (I-I).
 \]
Consequently, for any even error function $\alpha:(I-I)\to\RR_+$ with $\alpha(0)=0$,
 using that $\widehat\alpha$ is starlike with respect to $0$ (checked above),
 we can conclude that $\widehat{\widehat\alpha}=\widehat\alpha$.
\proofend 
\end{Rem}

Next, we establish a particular case of part (ii) of Theorem 8 in Mak\'o et al.\ \cite{MakNikPal}, which offers a characterisation of differentiable strongly convex functions with respect to a nonnegative even error function. For reader's convenience, we provide a direct and simple proof.

\begin{Pro}\label{Prop_diff_strongly_convex}
Let $I\subseteq \RR$ be a nonempty open interval, $\alpha:(I-I)\to\RR_+$ be an even function, and $L:I\to \RR$ be a differentiable function.
Then $L$ is strongly $\alpha$-convex if and only if 
 \begin{align}\label{help18}
  L(x)\geq L(y) + L'(y)(x-y) + \alpha(x-y),\qquad x,y\in I.
 \end{align}
\end{Pro}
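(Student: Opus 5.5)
The plan is to prove the two implications separately and directly from Definition \ref{Def_strongly_convex_2}; Theorem \ref{Thm_MakNikPal} is not needed.

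\emph{Sufficiency.} Assume \eqref{help18}. Fix $x,y\in I$ and $t\in[0,1]$, and put $z:=tx+(1-t)y\in I$. We apply \eqref{help18} twice with base point $z$, once at $x$ and once at $y$. Since $x-z=(1-t)(x-y)$ and $y-z=t(y-x)$, this gives
\[
L(x)\geq L(z)+L'(z)(1-t)(x-y)+\alpha((1-t)(x-y))
\]
and
\[
L(y)\geq L(z)+L'(z)\,t(y-x)+\alpha(t(y-x)).
\]
We multiply the first inequality by $t$ and the second by $1-t$, then add. The derivative terms cancel, because $t(1-t)(x-y)+(1-t)t(y-x)=0$. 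What remains is exactly \eqref{help_strongly_alpha_convex_def}.

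\emph{Necessity.} Assume $L$ is strongly $\alpha$-convex, and fix $x,y\in I$ with $x\neq y$ (the case $x=y$ is trivial). In \eqref{help_strongly_alpha_convex_def} we swap the roles so that $y$ is the point we approach. For $s\in(0,1]$ we take the point $sx+(1-s)y$, with weight $s$ on $x$, which gives
\[
L(y+s(x-y))\leq sL(x)+(1-s)L(y)-s\,\alpha((1-s)(x-y))-(1-s)\,\alpha(s(y-x)).
\]
Using $\alpha\geq0$, we drop the last term. Subtracting $L(y)$ and dividing by $s>0$ then yields
\[
\frac{L(y+s(x-y))-L(y)}{s}\leq L(x)-L(y)-\alpha((1-s)(x-y)).
\]
Letting $s\to0^+$, the left-hand side tends to $L'(y)(x-y)$ by differentiability. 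This gives \eqref{help18} provided $\alpha((1-s)(x-y))\to\alpha(x-y)$.

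\emph{Main obstacle.} This last limit is the delicate point, since $\alpha$ is not assumed to be continuous. The plan avoids it by not sending the argument of $\alpha$ to a limit. Instead we pass first to $\limsup$ in the weaker form
\[
L'(y)(x-y)\leq L(x)-L(y)-\liminf_{s\to0^+}\alpha((1-s)(x-y)),
\]
and then remove the perturbation $(1-s)$ as follows.

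Fix $s\in(0,1)$ and set $x_s:=y+(x-y)/(1-s)$, so that $(1-s)(x_s-y)=x-y$. When $x_s\in I$, we apply the preceding inequality with $x_s$ in place of $x$ and the fixed $s$, keeping the exact $\alpha(x-y)$ term. We then use differentiability to handle the difference quotient at the fixed $s$, and let $s\to0$.

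More cleanly, we would argue via the chain: strong $\alpha$-convexity implies, by Theorem \ref{Thm_MakNikPal}, a support inequality \eqref{help19} with some constant $c_y$. Differentiability of $L$ at $y$, together with $\alpha\geq0$, forces $c_y=L'(y)$: the one-sided difference quotients of $L$ at $y$ are bounded below by $c_y$ from the right and above by $c_y$ from the left. Substituting $c_y=L'(y)$ into \eqref{help19} gives \eqref{help18} directly. This resolves the continuity issue entirely, and is the route I would actually write up for necessity.
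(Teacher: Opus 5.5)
Your proposal is correct. For necessity, the route you say you would write up is the paper's: obtain the support constant $c_y$ from Theorem~\ref{Thm_MakNikPal} and identify it with $L'(y)$ through one-sided difference quotients. Your remark that $\alpha\geq 0$ alone suffices for this identification is right. For $x>y$ the term $\alpha(x-y)/(x-y)$ is nonnegative and can be dropped, and for $x<y$ it is nonpositive. So the paper's appeal to $\lim_{t\to 0}\alpha(t)/t=0$ from Corollary~\ref{Cor_MakNikPal1} is not actually needed.

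For sufficiency you differ from the paper. Instead of feeding \eqref{help18} into the implication (ii)$\Rightarrow$(i) of Theorem~\ref{Thm_MakNikPal}, you verify the definition directly: you apply \eqref{help18} at the base point $z=tx+(1-t)y$ and let the derivative terms cancel. This is the easy half of that theorem specialised to $c_z=L'(z)$. It makes the argument self-contained at the cost of a few lines, whereas the paper's citation is shorter.

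Your abandoned perturbation sketch also works, and it would make the whole proof independent of Theorem~\ref{Thm_MakNikPal}, whose nontrivial direction is precisely (i)$\Rightarrow$(ii). This would match your opening claim. Take $x_s=y+(x-y)/(1-s)$, which lies in $I$ for small $s$ because $I$ is open. The definition with weight $s$ on $x_s$ gives $\frac{L(y+\frac{s}{1-s}(x-y))-L(y)}{s}\leq L(x_s)-L(y)-\alpha(x-y)$. Let $s\to 0^+$ on both sides simultaneously, rather than handling the quotient ``at fixed $s$''. Differentiability at $y$ and continuity at $x$ then yield $L'(y)(x-y)\leq L(x)-L(y)-\alpha(x-y)$.

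As written, your text first says Theorem~\ref{Thm_MakNikPal} is not needed and then uses it. Pick one of the two necessity arguments and present it cleanly. Finally, the case $x=y$ is not quite vacuous: it requires $\alpha(0)=0$, which follows at once from the definition with $t=0$.
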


\begin{proof}
First, assume that $L$ is strongly $\alpha$-convex.
Then, by Theorem \ref{Thm_MakNikPal}, for all $x_0\in I$, there exists a constant $c_{x_0}\in\RR$ such that \eqref{help19} holds.
Hence, if $x>x_0$, $x,x_0\in I$, then
 \[
   \frac{L(x)-L(x_0)}{x-x_0}\geq c_{x_0}+\frac{\alpha(x-x_0)}{x-x_0}.
 \]
By taking the limit as $x\downarrow x_0$, using part (i) of Corollary \ref{Cor_MakNikPal1} and the differentiability of $L$,
 we get that $L'(x_0)\geq c_{x_0}$, $x_0\in I$.
With the same argument for $x<x_0$, $x,x_0\in I$, we have that $L'(x_0)\leq c_{x_0}$, $x_0\in I$. 
Therefore, $L'(x_0) = c_{x_0}$, $x_0\in I$, and, hence \eqref{help19} yields \eqref{help18}.

Now, we assume that \eqref{help18} holds.
Then the inequality in part (ii) of Theorem \ref{Thm_MakNikPal} holds with $x_0:=y$ and $c_{x_0}:=L'(y)$, where $y\in I$.
Therefore, Theorem \ref{Thm_MakNikPal} yields that $L$ is strongly $\alpha$-convex, as desired.
\end{proof}

\begin{Rem}
Let $I\subseteq \RR$ be a nonempty open interval, $\alpha:(I-I)\to\RR_+$ be an even function, and $L:I\to \RR$ be a differentiable function.
If $L$ is strongly $\alpha$-convex, then, by Proposition \ref{Prop_diff_strongly_convex} (by interchanging the roles of $x$ and $y$
 in \eqref{help18} and taking into account that $\alpha$ is even), we have that
 \begin{align}\label{help20}
    L(y)\geq L(x) + L'(x)(y-x) + \alpha(x-y),\qquad x,y\in I.
 \end{align}
By adding the inequalities \eqref{help18} and \eqref{help20} side by side, we get the following necessary condition for $L$ to be strongly $\alpha$-convex:
   \begin{align}\label{help21}
    (L'(x)-L'(y))(x-y)\geq 2\alpha(x-y),\qquad x,y\in I.
  \end{align}
However, we do not know whether, in general, the condition \eqref{help21} is sufficient or not.
In the special case $\alpha=0$, it is sufficient as well.
Indeed, in case of $\alpha=0$, the inequality \eqref{help21} takes the form 
 $(L'(x) - L'(y))(x-y)\geq 0$, $x,y\in I$, which yields that $L'$ is an increasing function on $I$. 
Therefore, using also the Lagrange mean value theorem, for any $x,y\in I$, $x\ne y$, there exists a point $\xi$
 between $x$ and $y$ such that
 \[
   L(x) - L(y) = L'(\xi)(x-y) \geq L'(y)(x-y),
 \] 
 that is, \eqref{help18} holds, which, by Proposition \ref{Prop_diff_strongly_convex}, implies that $L$ is strongly $\alpha$-convex.
\proofend  
\end{Rem}

Next, we present two nontrivial examples for strongly $\alpha$-convex functions.

\begin{Ex}\label{Ex_power_strongly_convex}
Let $I:=(0,\infty)$, $p\in[2,\infty)$, let $L:I\to\RR$ be defined by $L(x):=x^{p}$, $x\in I$, and $\alpha:\RR\to\RR$ be defined by $\alpha(t):=|t|^{p}$, $t\in\RR$. 
We get that $\alpha$ is continuous, even and strictly convex, and so it could serve as an example for part (iii) of our forthcoming Theorem \ref{Thm_monotonicity_alpha}.
Using Proposition \ref{Prop_diff_strongly_convex}, we are going to check that $L$ is strongly $\alpha$-convex.
Then $(I-I)=\RR$, $L'(x) = p x^{p-1}$, $x\in I$, and we verify that the inequality \eqref{help18} holds, i.e., 
 \[
  x^{p} - y^{p} \geq p y^{p-1}(x-y) + |x-y|^{p},\qquad x,y\in I,
 \]
 holds.
This inequality is equivalent to 
 \begin{align}\label{help22}
  x^{p} + (p-1)y^{p} - pxy^{p-1} - |x-y|^{p}\geq 0, \qquad x,y\in I.
 \end{align}
By dividing both sides of \eqref{help22} by $x^{p}$, 
 we have that \eqref{help22} is equivalent to 
 \[
 1+(p-1)\left(\frac{y}{x}\right)^{p}  - p\left(\frac{y}{x}\right)^{p-1} - \left|1-\frac{y}{x}\right|^{p}\geq 0,
   \qquad x,y\in I.
 \]
With the notation $t:=\frac{y}{x}\in I$, we have that \eqref{help22} is equivalent to 
 \begin{align}\label{help23}
  f(t):= 1 + (p-1)t^{p} - p t^{p-1} - |1-t|^{p}\geq 0, \qquad t\in I.
 \end{align}
If $p=2$, then \eqref{help23} holds trivially, since then $f(t)=1+t^2 -2t - |1-t|^2 = 0$, $t\in I$.
In what follows, assume that $p>2$. 
Then the derivative of $|1-t|^p$ with respect to $t$ at the point $1$ is $0$, since 
 \[
  \lim_{t\to 1} \frac{|1-t|^p}{t-1} =  \lim_{t\to 1} \Big(\sign(t-1)|1-t|^{p-1}\Big) =0,
 \]
 and therefore
 \begin{align*}
  f'(t)& = (p-1)p t^{p-1} -  (p-1)p t^{p-2} + p|1-t|^{p-2}(1-t)\\
       & = (p-1)p t^{p-2}(t-1) + p|1-t|^{p-2}(1-t) \\
       & = p(1-t) \big( |1-t|^{p-2} - (p-1)t^{p-2} \big)\\
       & = p(1-t) t^{p-2} \left(  \left| \frac{1}{t} - 1 \right|^{p-2} - p + 1 \right),
        \qquad t\in I.
 \end{align*}
Here $\left| \frac{1}{t} - 1 \right|^{p-2} - p + 1=0$ holds for some $t\in I$ if and only if 
 \[
  \left\vert \frac{1}{t} - 1 \right\vert = (p-1)^{\frac{1}{p-2}},
   \qquad \text{i.e.,}\qquad 
   t = \left(1\pm (p-1)^{\frac{1}{p-2}} \right)^{-1}. 
 \]
Since $p-1>1$, we have $(p-1)^{\frac{1}{p-2}}> 1$, yielding that 
 \[
     \left(1 -  (p-1)^{\frac{1}{p-2}} \right)^{-1} <0 \qquad \text{and}\qquad 
      \left(1 +  (p-1)^{\frac{1}{p-2}} \right)^{-1}\in (0,1).
 \]
Note also that $f'$ changes sign at $t=1$ from negative to positive, and at $t=\left(1 +  (p-1)^{\frac{1}{p-2}} \right)^{-1}$
 from positive to negative. 
This implies that the function $f$ has a local minimum at $t=1$, and a local maximum at 
 $t=\left(1 +  (p-1)^{\frac{1}{p-2}} \right)^{-1}$.
Since 
 \begin{align*}
    &\lim_{t\downarrow 0} f(t) = 1-1=0, \qquad f(1) = 1 + (p-1) -p=0,\\
    &\lim_{t\to \infty} f(t) = \lim_{t\to \infty} t^{p}\left( \frac{1}{t^{p}} + p-1 - \frac{p}{t} - \left|\frac{1}{t}-1\right|^{p}\right)
                             = \infty\cdot (p-1-1)
                             = \infty,  
 \end{align*}
 we have that $f(t)\geq f(1)=0$, $t\in I$, yielding that \eqref{help23} holds.
 
In this setting, the function $\widehat\alpha:\RR\to\RR_+$ defined in \eqref{def_alphahat} coincides with $\alpha$, since
 \Eq{*}{
  \widehat\alpha(u)
  &= \sup\left\{  \frac{\alpha(tu)}{t} : t\in(0,1] \right\}\\
  &= \sup\left\{  t^{p-1}|u|^{p} : t\in(0,1] \right\}
  = |u|^{p}
  =\alpha(u),\qquad u\in(I-I)=\RR.
 }
\proofend
\end{Ex}

\begin{Ex}\label{Ex_power_strongly_convex_2}
Let $I:=(0,\infty)$, let $L:I\to\RR$ be defined by $L(x):=\ee^{x}-x-1$, $x\in I$, and $\alpha:\RR\to\RR$ be defined by $\alpha(t):=\ee^{|t|}-|t|-1$, $t\in\RR$.
Using Example \ref{Ex_power_strongly_convex}, we check that $L$ is strongly $\alpha$-convex. 
Using part (i) of Proposition \ref{Pro_basic_properties} and Example \ref{Ex_power_strongly_convex} as well, 
 for each $n\geq 2$, $n\in\NN$, we get that the function 
 \ $(0,\infty)\ni x\mapsto \sum_{k=2}^n \frac{x^k}{k!}$ \ is strongly $\alpha_n$-convex, where $\alpha_n:\RR\to\RR$, $\alpha_n(t):=\sum_{k=2}^n \frac{\vert t\vert^k}{k!}$, $t\in\RR$.
Upon taking the limit $n\to\infty$ and using part (iii) of Proposition \ref{Pro_basic_properties}, the statement follows.
\proofend
\end{Ex}

In the next example, we point out that the function $(0,\infty)\ni x\mapsto x^2$ is strongly convex with respect to some error functions different from $\RR \ni x \mapsto x^2$, as well.

\begin{Ex}\label{Rem_triv_pelda_strongly_convex}
Using that $0\leq t\sin(t)\leq t^2$, $t\in(-\pi,\pi)$, and that $L:(0,\pi)\to(0,\infty)$, $L(x):=x^2$, $x\in(0,\pi)$, is strongly $\alpha_2$-convex with $\alpha_2(t):=t^2$, $t\in(-\pi,\pi)$ (following from Example \ref{Ex_power_strongly_convex} with $p=2$),
part (ii) of Proposition \ref{Pro_basic_properties} yields that $L$ is strongly $\alpha_1$-convex with $\alpha_1(t):=t\sin(t)$, $t\in(-\pi,\pi)$, as well.
In this case, for all $u\in(-\pi,\pi)$, we have
\[
 \widehat\alpha_1(u)
 =\sup\left\{  \frac{tu\sin(tu)}{t} : t\in(0,1] \right\}
 = \sup\left\{ u\sin(tu) : t\in(0,1] \right\}
 =\begin{cases}
   u\sin(u) & \mbox{if } |u|\leq \frac{\pi}{2},\\
   |u| & \mbox{if } \frac{\pi}{2}<|u|<\pi.
  \end{cases}
\]
Similarly, since $0\leq t\arctan(t)\leq t^2$, $t\in\RR$, we have that $L(x)=x^2$, $x\in\RR$ is strongly $\alpha_3$-convex
with $\alpha_3(t):=t\arctan(t)$, $t\in\RR$, as well. In this case, for all $u\in\RR$, we have
\Eq{*}{
 \widehat\alpha_3(u)
 &=\sup\left\{  \frac{tu\arctan(tu)}{t} : t\in(0,1] \right\}\\
 &= \sup\left\{ u\arctan(tu) : t\in(0,1] \right\}
 =u\arctan(u)
 =\alpha_3(u).
}
\proofend
\end{Ex}

Finally, we formulate a consequence of Theorem \ref{Thm_MakNikPal}, which is a generalization of Theorem \ref{Thm_strongly_convex}
 from strongly convex functions with some nonnegative modulus to strongly $\alpha$-convex functions,
 and from equal weights to convex weights.

\begin{Cor}\label{Cor_strong_alpha_convex}
Let $I\subseteq \RR$ be a nonempty open interval, $\alpha:(I-I)\to\RR_+$ be an even function,
 and $L:I\to \RR$ be a strongly $\alpha$-convex function.
Then
 \begin{align*}
   L\bigg(\sum_{i=1}^n t_ix_i\bigg)
   \leq \sum_{i=1}^n t_iL(x_i)
        -\sum_{i=1}^n t_i\alpha\bigg(x_i-\sum_{j=1}^n t_jx_j\bigg)
 \end{align*}
 for all $n\in\NN$, $x_1,\ldots,x_n\in I$ and $t_1,\ldots,t_n\in[0,1]$ satisfying $t_1+\cdots +t_n=1$.
\end{Cor}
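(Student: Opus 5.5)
The plan is to use the support-function characterisation in Theorem \ref{Thm_MakNikPal} at the barycenter. Fix $n\in\NN$, points $x_1,\ldots,x_n\in I$ and weights $t_1,\ldots,t_n\in[0,1]$ with $t_1+\cdots+t_n=1$. Put
\[
x_0:=\sum_{j=1}^n t_jx_j .
\]
Since $I$ is an interval, hence convex, we have $x_0\in I$. Because $L$ is strongly $\alpha$-convex, the implication (i)$\Rightarrow$(ii) of Theorem \ref{Thm_MakNikPal} gives a constant $c_{x_0}\in\RR$ such that
\[
L(x)\geq L(x_0)+c_{x_0}(x-x_0)+\alpha(x-x_0),\qquad x\in I.
\]

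Next, I substitute $x:=x_i$ for each $i\in\{1,\ldots,n\}$, multiply the resulting inequality by $t_i\geq 0$, and sum over $i$. Using $\sum_i t_i=1$, this yields
\[
\sum_{i=1}^n t_iL(x_i)\geq L(x_0)+c_{x_0}\sum_{i=1}^n t_i(x_i-x_0)+\sum_{i=1}^n t_i\alpha(x_i-x_0).
\]
The middle term vanishes, since
\[
\sum_{i=1}^n t_i(x_i-x_0)=x_0-x_0=0.
\]
Rearranging then gives exactly the asserted inequality, because $x_i-x_0=x_i-\sum_j t_jx_j$.

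I do not expect a genuine obstacle, since all the nontrivial work is contained in Theorem \ref{Thm_MakNikPal}. The only points to check are minor. First, the statement allows arbitrary $n\in\NN$, including $n=1$, where the claim reduces to $L(x_1)\leq L(x_1)-\alpha(0)$; this holds because $\alpha(0)=0$ by part (i) of Corollary \ref{Cor_MakNikPal1}, and it is in any case covered by the general argument. Second, weights $t_i=0$ cause no difficulty. Third, each difference $x_i-x_0$ lies in $I-I$, so $\alpha(x_i-x_0)$ is defined.
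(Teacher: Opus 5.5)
Your proposal is correct and follows the paper's proof essentially verbatim. Both apply the support inequality from part (ii) of Theorem \ref{Thm_MakNikPal} at the barycenter $x_0=\sum_{j=1}^n t_jx_j$, weight the instances at $x=x_i$ by $t_i$, sum, and use $\sum_{i=1}^n t_i(x_i-x_0)=0$. Your side checks ($x_0\in I$, $x_i-x_0\in I-I$, the case $n=1$) are sound and merely make explicit what the paper leaves implicit.
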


\begin{proof}
Let $n\in\NN$, $x_1,\ldots,x_n\in I$ and $t_1,\ldots,t_n\in[0,1]$ satisfying $t_1+\cdots +t_n=1$ be arbitrarily fixed.
Using part (ii) of Theorem \ref{Thm_MakNikPal} with the choices $x:=x_i$, $i=1,\ldots,n$, and $x_0:=\sum_{i=1}^n t_ix_i$, we have that
 \begin{align*}
  L(x_i)\geq L(x_0) + c_{x_0}(x_i-x_0) + \alpha(x_i-x_0),\qquad i=1,\ldots,n,
 \end{align*}
where $c_{x_0}\in\RR$.
Multiplying this inequality by $t_i$, $i=1,\ldots,n$, and summing up the corresponding inequalities side by side, we can obtain that
 \begin{align*}
   \sum_{i=1}^n t_iL(x_i)
     \geq L(x_0) + c_{x_0}\sum_{i=1}^n t_i(x_i-x_0)
        + \sum_{i=1}^n t_i \alpha(x_i-x_0),
 \end{align*}
 where, by the definition of $x_0$,
 \[
    \sum_{i=1}^n t_i(x_i-x_0)
     = \sum_{i=1}^n t_i x_i- \left(\sum_{i=1}^nt_i\right)x_0
     = x_0-x_0
     = 0.
 \]
Hence
 \begin{align*}
   \sum_{i=1}^n t_iL(x_i)
      &\geq L(x_0) + \sum_{i=1}^n t_i \alpha(x_i-x_0)
      = L\bigg(\sum_{i=1}^n t_ix_i\bigg)
        +\sum_{i=1}^n t_i\alpha\bigg(x_i-\sum_{j=1}^n t_jx_j\bigg),
 \end{align*}
 which yields the statement.
\end{proof}

\section{Preliminaries on exchangeable random variables}\label{Sec_Mat_Gar}

First, we recall the notion of exchangeability of random variables.

\begin{Def}\label{Def_Exch}
Let $(\xi_\ell)_{\ell\in\NN}$ be a sequence of random variables.
Given $n\in \NN$, the random variables $\xi_1,\ldots,\xi_n$ are called {\sl exchangeable} if for any permutation $\sigma$ of $(1,2,\ldots,n)$,
 the law of $(\xi_{\sigma(1)},\ldots,\xi_{\sigma(n)})$ coincides with that of $(\xi_1,\ldots,\xi_n)$.
The sequence $(\xi_\ell)_{\ell\in\NN}$ is called exchangeable if, for each $n\in\NN$,
 the random variables $\xi_1,\ldots,\xi_n$ are exchangeable.
\end{Def}

If $(\xi_n)_{n\in\NN}$ is a sequence of independent and identically distributed random variables,
 then $(\xi_n)_{n\in\NN}$ is exchangeable as well.
Note that exchangeability is a weaker assumption than that of being independent and identically distributed,
but stronger than that of being identically distributed.

In the next proposition, we collect some basic properties of exchangeability that will be used later on, and, for completeness, we give a proof as well.

\begin{Pro}\label{Pro1}
The following assertions hold.
\begin{itemize}
 \item[(i)] If $n\in\NN\setminus\{1\}$ and $\xi_1,\ldots,\xi_n$ are exchangeable random variables, then the $(n-1)$-dimensional random vectors
 \[
   (\xi_2,\xi_3,\ldots,\xi_n), \quad (\xi_1,\xi_3,\ldots,\xi_n),\quad \ldots\quad (\xi_1,\xi_2,\ldots,\xi_{n-1})
 \]
 have the same distributions.
 
\item[(ii)]
If $h:\RR\to\RR$ is a Borel measurable function and $(\xi_n)_{n\in\NN}$ is a sequence of exchangeable random variables, then $(h(\xi_n))_{n\in\NN}$ is also exchangeable.

\item[(iii)]
If $n\in\NN$, $\lambda_1,\ldots,\lambda_n\in\RR$, and $\xi_1,\ldots,\xi_n$ are exchangeable random variables,
 then, for each permutation $\sigma$ of $(1,\ldots,n)$, we have $\sum_{i=1}^n \lambda_{\sigma(i)} \xi_i$ has the same distribution as $\sum_{i=1}^n \lambda_i \xi_i$.
\end{itemize}
\end{Pro}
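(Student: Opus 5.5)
All three parts follow directly from Definition \ref{Def_Exch} by pushing forward laws under measurable maps. The one tool used throughout is this: if two random vectors have the same law, then so do their images under any fixed Borel map.

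For part (i), fix $k\in\{1,\ldots,n\}$. Choose a permutation $\sigma$ of $(1,\ldots,n)$ that moves index $k$ to the last position and keeps the remaining indices in increasing order, so that
\[
(\xi_{\sigma(1)},\ldots,\xi_{\sigma(n-1)}) = (\xi_1,\ldots,\xi_{k-1},\xi_{k+1},\ldots,\xi_n).
\]
By exchangeability, $(\xi_{\sigma(1)},\ldots,\xi_{\sigma(n)})$ and $(\xi_1,\ldots,\xi_n)$ have the same law. Now apply the projection $\pi:\RR^n\to\RR^{n-1}$ that drops the last coordinate. Images of equally distributed vectors under $\pi$ are equally distributed, so the vector with $\xi_k$ removed has the same law as $(\xi_1,\ldots,\xi_{n-1})$. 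Since $k$ was arbitrary, all the listed vectors share this law.

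For part (ii), fix $n$ and a permutation $\sigma$. Define the Borel map $H:\RR^n\to\RR^n$ by $H(x_1,\ldots,x_n):=(h(x_1),\ldots,h(x_n))$. Then
\[
(h(\xi_{\sigma(1)}),\ldots,h(\xi_{\sigma(n)})) = H(\xi_{\sigma(1)},\ldots,\xi_{\sigma(n)}),
\]
and $H(\xi_1,\ldots,\xi_n)$ equals the unpermuted vector. The two arguments of $H$ have equal laws by exchangeability, so their images do too. This holds for every $n$ and every $\sigma$, which is exactly exchangeability of $(h(\xi_n))_{n\in\NN}$.

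For part (iii), reindex the sum by $j=\sigma(i)$:
\[
\sum_{i=1}^n \lambda_{\sigma(i)}\xi_i=\sum_{j=1}^n\lambda_j\xi_{\sigma^{-1}(j)}.
\]
Let $g(x_1,\ldots,x_n):=\sum_j\lambda_jx_j$, which is continuous. Then the reindexed sum is $g(\xi_{\sigma^{-1}(1)},\ldots,\xi_{\sigma^{-1}(n)})$. Exchangeability applied to the permutation $\sigma^{-1}$ says this argument has the same law as $(\xi_1,\ldots,\xi_n)$. Hence the sum has the same law as $g(\xi_1,\ldots,\xi_n)=\sum_i\lambda_i\xi_i$.

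The only step needing care is the index bookkeeping in (iii). It must be written as a relabelling by $\sigma^{-1}$ rather than $\sigma$, but since $\sigma^{-1}$ is also a permutation, exchangeability covers it.
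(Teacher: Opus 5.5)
Your proof is correct and follows essentially the paper's route. In each part you combine exchangeability under a suitable permutation with the fact that equally distributed random vectors have equally distributed images under a fixed Borel map: a coordinate projection, the coordinatewise map $H$, and the linear functional after reindexing by $\sigma^{-1}$, respectively. The only cosmetic difference is in (i): you compare every vector to the fixed reference $(\xi_1,\ldots,\xi_{n-1})$ via a permutation sending index $k$ to the last slot, whereas the paper uses a transposition and drops the first coordinate.
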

 
\begin{proof}
(i): In order to check that $(\xi_2,\xi_3,\ldots,\xi_n)$ and $(\xi_1,\xi_3,\ldots,\xi_n)$ have the same distributions,
 let us consider the permutation $\sigma(1):=2$, $\sigma(2):=1$, $\sigma(i):=i$, $i\in\{3,4,\ldots,n\}$ of $(1,\ldots,n)$.
Then, by the exchangeability of $\xi_1,\ldots,\xi_n$, we have that the $n$-dimensional random vectors
$(\xi_1,\xi_2,\xi_3,\ldots,\xi_n)$ and $(\xi_2,\xi_1,\xi_3,\ldots,\xi_n)$ have the same distributions, yielding that the $(n-1)$-dimensional random vectors consisting of their last $(n-1)$ coordinates have the same distributions, i.e., $(\xi_2,\xi_3,\ldots,\xi_n)$ and $(\xi_1,\xi_3,\ldots,\xi_n)$ have the same distributions, as desired.
The other equality in distributions can be checked similarly by choosing an appropriate permutation of $(1,\ldots,n)$.

(ii): For each $n\in\NN$, let us consider the function $H_n:\RR^n\to\RR^n$, $H_n(x_1,\ldots,x_n):=(h(x_1),\ldots,h(x_n))$, $x_1,\ldots,x_n\in\RR$.
Then $H_n$ is Borel measurable, since the mapping $\RR^n\ni(x_1,\ldots,x_n)\mapsto h(x_i)$ is Borel measurable for each $i\in\{1,\ldots,n\}$ (following from the fact that $\cB(\RR^n)$ coincides with the $n$-fold product of the $\sigma$-algebra $\cB(\RR)$).
Further, for each $n\in\NN$, by the exchangeability of $\xi_1,\ldots,\xi_n$,
 for any permutation $\sigma$ of $(1,2,\ldots,n)$,
 the law of $(\xi_{\sigma(1)},\ldots,\xi_{\sigma(n)})$ coincides with that of $(\xi_1,\ldots,\xi_n)$.
Since $H_n$ is Borel measurable, it implies that, for each $n\in\NN$ and any permutation $\sigma$
 of $(1,2,\ldots,n)$, the law of $H_n(\xi_{\sigma(1)},\ldots,\xi_{\sigma(n)})$ coincides with that of $H_n(\xi_1,\ldots,\xi_n)$.
This yields that, for any permutation $\sigma$ of $(1,2,\ldots,n)$, the law of $(h(\xi_{\sigma(1)}),\ldots,h(\xi_{\sigma(n)}))$ coincides with that of $(h(\xi_1),\ldots,h(\xi_n))$, as desired.

(iii): For any permutation $\sigma$ of $(1,2,\ldots,n)$, we have that
 \begin{align}\label{help10}
   \sum_{i=1}^n \lambda_{\sigma(i)} \xi_i = \sum_{i=1}^n \lambda_i \xi_{\sigma^{-1}(i)},
 \end{align}
 where $\sigma^{-1}$ denotes the inverse permutation of $\sigma$.
By the exchangeability of $\xi_1,\ldots,\xi_n$, we have that $(\xi_1,\ldots,\xi_n)$ and $(\xi_{\sigma^{-1}(1)},\ldots,\xi_{\sigma^{-1}(n)})$
 have the same distributions, which implies that $\sum_{i=1}^n \lambda_i \xi_{\sigma^{-1}(i)}$
 and $\sum_{i=1}^n \lambda_i \xi_i$ have the same distributions (following from the fact that the mapping $\RR^n\ni(x_1,\ldots,x_n)\mapsto \sum_{i=1}^n \lambda_i x_i$ is Borel measurable).
As a consequence, using \eqref{help10}, we get that $\sum_{i=1}^n \lambda_{\sigma(i)} \xi_i$ and $\sum_{i=1}^n \lambda_i \xi_i$ have the same distributions, as desired.
\end{proof} 
 
In statistics, forecasting based on exchangeable random variables, 
 in particular, on independent and identically distributed random variables, is of high importance. 
Motivated by an example due to Mattei and Garreau \cite[Section 3.3]{MatGar}, in which they pointed out that the convex loss of an ensemble forecasting can get worse in average when adding a new component to it, we present an example in which we demonstrate that the convex loss of a forecasting based on exchangeable random variables with non-equal weights may get worse in average compared to the one with equal weights.
In fact, our example is also a generalization of the above mentioned example of Mattei and Garreau \cite[Section 3.3]{MatGar}.

\begin{Ex}\label{Ex_MatGar}
Let $n\in\NN$, and $\lambda_1,\dots,\lambda_n\in[0,1]$ with $\lambda_1+\dots+\lambda_n=1$.
Further, let $\xi_1,\dots,\xi_n$ be exchangeable random variables.
For $i\in\{1,\dots,n\}$ and $j\in\{0,\dots,n-1\}$, define the truncated sum $i\oplus j$ of $i$ and $j$ by
       \begin{align*}
         i\oplus j
         :=\begin{cases}
          i+j &\mbox{if } i+j\leq n,\\
          i+j-n &\mbox{if } i+j>n.
          \end{cases}
       \end{align*}
It is clear that $(i,i\oplus 1,\dots,i\oplus (n-1))$, $i\in\{1,\ldots,n\}$, are different cyclic permutations of $(1,\dots,n)$.
Hence, taking into that $\lambda_1+\dots+\lambda_n=1$, one can easily see that the identity
       \begin{align}\label{help_Mat_example_1}
        \frac{\xi_1+\dots+\xi_n}{n}
        = \frac{1}{n} \sum_{j=0}^{n-1}
        \big(\lambda_{1\oplus j}\xi_1+\dots+\lambda_{n\oplus j}\xi_n\big)
       \end{align}
holds. In what follows, let  $L:\RR\to\RR$ be a convex function, and suppose that 
  \begin{align}\label{help17}
  \EE\big( \vert L(\lambda_{1}\xi_1+\dots+\lambda_{n}\xi_n) \vert \big)<\infty.
  \end{align}
Here $L(\lambda_{1}\xi_1+\dots+\lambda_{n}\xi_n)$ is indeed a random variable, since a convex function
 defined on $\RR$ is continuous and hence it is Borel measurable as well.
Then \eqref{help_Mat_example_1} implies that 
       \begin{align}\label{help_Mat_example_2}
         L\Big(\frac{\xi_1+\dots+\xi_n}{n} \Big)
         \leq \frac{1}{n} \sum_{j=0}^{n-1}
        L(\lambda_{1\oplus j}\xi_1+\dots+\lambda_{n\oplus j}\xi_n).
       \end{align}
Since $\xi_1,\dots,\xi_n$ are exchangeable and $(1\oplus j,\dots,n\oplus j)$ is a permutation of $(1,\dots,n)$ for each $j\in\{0,1,\ldots,n-1\}$,
 by part (iii) of Proposition \ref{Pro1}, we have that the random variables
 $\lambda_{1\oplus j}\xi_1+\dots+\lambda_{n\oplus j}\xi_n$, $j\in\{0,\dots,n-1\}$,  
 have the same law, which, by the assumption \eqref{help17}, in particular, yields that
 \[
  \EE(\vert L(\lambda_{1\oplus j}\xi_1+\dots+\lambda_{n\oplus j}\xi_n)\vert ) < \infty,
   \qquad j\in\{0,1,\ldots,n-1\}.
 \]
Therefore, by taking expectation of both sides of \eqref{help_Mat_example_2}, we have that
       \begin{align*}
         \EE\left( L\Big(\frac{\xi_1+\dots+\xi_n}{n} \Big)\right)
         \leq \frac{1}{n} \sum_{j=0}^{n-1}
        \EE\big( L(\lambda_{1\oplus j}\xi_1+\dots+\lambda_{n\oplus j}\xi_n)\big)
         = \EE\big( L(\lambda_{1}\xi_1+\dots+\lambda_{n}\xi_n)\big),
       \end{align*}
and $\EE\left( L\Big(\frac{\xi_1+\dots+\xi_n}{n} \Big)\right)\in[-\infty,\infty)$.   
By choosing $n=2$, $\lambda_1=\frac{2}{3}$ and $\lambda_2=\frac{1}{3}$, we get back the example of
Mattei and Garreau \cite[Section 3.3]{MatGar} under the weaker assumption that $\xi_1$ and $\xi_2$ are exchangeable instead of $\xi_1$ and $\xi_2$ are independent.
\proofend
\end{Ex}

\section{Results for convex loss functions and strongly convex loss functions with a nonnegative modulus}\label{Sec_convex_loss}

First, we recall a result on the arithmetic means of exchangeable random variables and convex loss functions due to Marshall and Proschan \cite{MarPro} (see also Marshall et al.\ \cite[Proposition B.2.b on page 395]{MarOlk}).
In Proposition B.2.b on page 395 of the book \cite{MarOlk} by Marshall et al., the above mentioned result was stated (with a missing moment condition) for independent and identically distributed random variables, but it was also noted (after the proof) that the statement was valid for exchangeable random variables as well. 
We mention that this result has also recently been recalled by Mattei and Garreau \cite[Theorem 2]{MatGar}.
 
\begin{Thm}\label{Thm_Mat_Gar_1}
Let $I\subseteq \RR$ be a nonempty open interval and $(\xi_n)_{n\in\NN}$ be a sequence of exchangeable random variables with values in $I$.
If $L:I\to\RR$ is a convex (loss) function such that $\EE(\vert L(\xi_1)\vert)<\infty$, then
 $\EE(L(\frac{\xi_1+\cdots+\xi_n}{n}))\in[-\infty,\infty)$, $n\in\NN$, and the sequence $\big(\EE(L(\frac{\xi_1+\cdots+\xi_n}{n}))\big)_{n\in\NN}$ is decreasing.
\end{Thm}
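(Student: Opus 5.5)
The plan is the classical leave-one-out decomposition, in the spirit of Example~\ref{Ex_MatGar}. Fix $n\geq 2$ and write $\bar\xi_n:=\frac{\xi_1+\cdots+\xi_n}{n}$. For $i\in\{1,\ldots,n\}$ let $\bar\xi_n^{(i)}:=\frac{1}{n-1}\sum_{j\ne i}\xi_j$ denote the mean with the $i$-th term removed. A direct computation gives the identity
\[
\bar\xi_n=\frac{1}{n}\sum_{i=1}^n \bar\xi_n^{(i)},
\]
since each $\xi_j$ appears in exactly $n-1$ of the inner sums. Every $\bar\xi_n^{(i)}$ takes values in $I$, because $I$ is an interval. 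Convexity of $L$ (Jensen's inequality with equal weights) then yields the pointwise bound
\[
L(\bar\xi_n)\leq \frac{1}{n}\sum_{i=1}^n L\big(\bar\xi_n^{(i)}\big).
\]

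Next I will handle integrability. Since $L$ is convex on the open interval $I$, it is continuous, so all the compositions are random variables. Again by convexity, $L(\bar\xi_m)\leq \frac1m\sum_{j=1}^m L(\xi_j)$. The $\xi_j$ are identically distributed (exchangeability), so each $L(\xi_j)$ is integrable with $\EE(L(\xi_j))=\EE(L(\xi_1))$. Hence the positive part of $L(\bar\xi_m)$ is integrable, and $\EE(L(\bar\xi_m))\in[-\infty,\infty)$ is well defined for every $m\in\NN$. This settles the first assertion.

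Finally, I take expectations in the pointwise bound. By part (i) of Proposition~\ref{Pro1}, each vector $(\xi_j)_{j\ne i}$ has the same law as $(\xi_1,\ldots,\xi_{n-1})$. It follows that $L(\bar\xi_n^{(i)})$ has the same law as $L(\bar\xi_{n-1})$, because it is a Borel function of that vector. Therefore
\[
\EE\big(L(\bar\xi_n)\big)\leq \frac1n\sum_{i=1}^n \EE\big(L(\bar\xi_n^{(i)})\big)=\EE\big(L(\bar\xi_{n-1})\big).
\]

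The main point needing care is taking expectations of an inequality between extended-valued expectations. The left-hand side may equal $-\infty$, but each summand on the right has integrable positive part. So the expectation of the sum equals the sum of the expectations, and monotonicity of the integral for quasi-integrable variables applies. This completes the monotonicity assertion.
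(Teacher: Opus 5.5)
Your proof is correct and follows the paper's approach. The paper cites this theorem from Marshall and Proschan without proving it, but its proof of part (i) of Theorem \ref{Thm_monotonicity_alpha} uses the same argument. It takes the leave-one-out decomposition $y_k=\frac{1}{n-1}\sum_{i\neq k}f(\xi_i)$, bounds the positive parts via Jensen against the individual terms, and identifies the leave-one-out expectations by exchangeability. With $f$ the identity and $\alpha=0$, that proof reduces to yours.
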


Next, we present a consequence of Theorem \ref{Thm_Mat_Gar_1} for quasi-arithmetic means

\begin{Thm}\label{Thm_monotonicity}
Let $I\subseteq \RR$ be a nonempty open interval, $f:I\to\RR$ be a continuous and strictly increasing function, and $(\xi_n)_{n\in\NN}$ be a sequence of exchangeable random variables with values in $I$.
If $L:I\to\RR$ is a function such that the function $L\circ f^{-1}$ is convex on $f(I)$
 and $\EE(\vert L(\xi_1)\vert)<\infty$, then $\EE\big(L\big(\mathscr{A}_n^{f}(\xi_1,\ldots,\xi_n)\big) \big)\in[-\infty,\infty)$, $n\in\NN$, and
 the sequence $\Big(\EE\big(L\big( \mathscr{A}_n^{f}(\xi_1,\ldots,\xi_n) \big)\big)\Big)_{n\in\NN}$ is decreasing.
\end{Thm}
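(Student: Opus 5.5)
The plan is to reduce the statement to Theorem \ref{Thm_Mat_Gar_1} by the change of variables $\eta_n:=f(\xi_n)$, $n\in\NN$, and the loss $\widetilde L:=L\circ f^{-1}:f(I)\to\RR$.

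\emph{The transformed setting.} Since $f$ is continuous and strictly increasing on the open interval $I$, the image $J:=f(I)$ is a nonempty open interval. Moreover, $f^{-1}:J\to I$ is continuous and strictly increasing. As $f$ is Borel measurable, part (ii) of Proposition \ref{Pro1} shows that $(\eta_n)_{n\in\NN}$ is an exchangeable sequence with values in $J$. By hypothesis, $\widetilde L$ is convex on $J$.

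\emph{The moment condition.} We have $\widetilde L(\eta_1)=L(f^{-1}(f(\xi_1)))=L(\xi_1)$. Hence $\EE(\vert\widetilde L(\eta_1)\vert)=\EE(\vert L(\xi_1)\vert)<\infty$.

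\emph{Matching the losses.} By Definition \ref{Def_quasi_arithmetic},
\[
  L\big(\mathscr{A}_n^{f}(\xi_1,\ldots,\xi_n)\big)
  = L\Big(f^{-1}\Big(\frac{\eta_1+\cdots+\eta_n}{n}\Big)\Big)
  = \widetilde L\Big(\frac{\eta_1+\cdots+\eta_n}{n}\Big),
  \qquad n\in\NN .
\]
Here $\frac{\eta_1+\cdots+\eta_n}{n}\in J$ because $J$ is an interval.

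\emph{Conclusion.} Applying Theorem \ref{Thm_Mat_Gar_1} to the interval $J$, the sequence $(\eta_n)_{n\in\NN}$ and the convex function $\widetilde L$ gives that
\[
  \EE\Big(\widetilde L\Big(\frac{\eta_1+\cdots+\eta_n}{n}\Big)\Big)\in[-\infty,\infty),\qquad n\in\NN,
\]
and that this sequence is decreasing. By the identity in the previous step, these are exactly the two claims of the theorem.

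\emph{Measurability.} The only point needing care is that $L(\mathscr{A}_n^{f}(\xi_1,\ldots,\xi_n))$ is a random variable, since $L$ itself is not assumed measurable. This is settled by observing that $\widetilde L$ is convex on the open interval $J$, hence continuous, so $\widetilde L\circ\big(\frac1n\sum_{i=1}^n\eta_i\big)$ is measurable. In the same way, $L=\widetilde L\circ f$ is continuous, so $L(\xi_1)$ is measurable as well. With this checked, the argument is a direct reduction and I expect no further obstacle.
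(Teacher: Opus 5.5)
Your proof is correct and is the paper's argument: the paper also applies Theorem \ref{Thm_Mat_Gar_1} to the open interval $f(I)$, the exchangeable sequence $(f(\xi_n))_{n\in\NN}$ (via part (ii) of Proposition \ref{Pro1}) and the convex function $L\circ f^{-1}$, using the identity $(L\circ f^{-1})\big(\frac{f(\xi_1)+\cdots+f(\xi_n)}{n}\big)=L(\mathscr{A}_n^{f}(\xi_1,\ldots,\xi_n))$. Your explicit measurability check is also correct; it uses the continuity of $L\circ f^{-1}$ and of $L=(L\circ f^{-1})\circ f$, a point the paper leaves implicit at this stage.
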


\begin{proof}
We apply Theorem \ref{Thm_Mat_Gar_1} by replacing $I$ by $f(I)$, $(\xi_n)_{n\in\NN}$ by $(f(\xi_n))_{n\in\NN}$ and $L$ by $L\circ f^{-1}$, respectively.
We check that one can indeed apply Theorem \ref{Thm_Mat_Gar_1} with these choices.
Since $I$ is a nonempty open interval and $f$ is strictly increasing and continuous, we have that $f(I)$ is a nonempty open interval.
Since $f$ is continuous, $f(\xi_n)$, $n\in\NN$, are random variables as well, and the exchangeability of $(\xi_n)_{n\in\NN}$ yields that of $(f(\xi_n))_{n\in\NN}$ (see part (ii) of Proposition \ref{Pro1}).
Using that 
 \begin{align}\label{help26}
    (L\circ f^{-1})\left(\frac{f(\xi_1)+\cdots+f(\xi_n)}{n}\right)
            =  L(\mathscr{A}_n^{f}(\xi_1,\ldots,\xi_n)),\qquad n\in\NN,
 \end{align}
Theorem \ref{Thm_Mat_Gar_1} readily implies the statement.
\end{proof}

The following Theorem \ref{Thm_strict_monotonicity} is about a connection of quasi-arithmetic means of
 exchangeable random variables and strongly convex loss functions with a nonnegative modulus,
 and it is a generalization of Theorem 4 in dimension 1 of Mattei and Garreau \cite{MatGar} 
 from arithmetic means to quasi-arithmetic means.
  
\begin{Thm}\label{Thm_strict_monotonicity}
Let $I\subseteq \RR$ be a nonempty open interval, $f:I\to\RR$ be a continuous and strictly increasing function,
 and $(\xi_n)_{n\in\NN}$ be a sequence of exchangeable random variables with values in $I$ such that
 $\DD^2(f(\xi_1))\in(0,\infty)$.
Let $\mu\in\RR_+$ and $L:I\to\RR$ be a function such that the function $L\circ f^{-1}: f(I)\to\RR$ is strongly convex with modulus $\mu$
 and $\EE(\vert L(\xi_1)\vert)<\infty$.
Then, for each $n\in\NN$, we have that $\EE\big(L\big( \mathscr{A}_n^{f}(\xi_1,\ldots,\xi_n) \big)\big)\in[-\infty,\infty)$ and the sequence
\[
  \bigg(\EE\big(L\big( \mathscr{A}_n^{f}(\xi_1,\ldots,\xi_n) \big)\big)-\frac{\mu(1-\varrho)}{n}\DD^2(f(\xi_1))\bigg)_{n\in\NN}
\]
is decreasing, where
 \[
   \varrho:=\frac{\cov(f(\xi_1),f(\xi_2))}{\sqrt{ \DD^2(f(\xi_1)) \DD^2(f(\xi_2)) } }
           = \corr(f(\xi_1),f(\xi_2)).
 \]
\end{Thm}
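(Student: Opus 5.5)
Set $\eta_n:=f(\xi_n)$ and $M:=L\circ f^{-1}$ on $J:=f(I)$. Then $(\eta_n)_{n\in\NN}$ is exchangeable by part (ii) of Proposition \ref{Pro1}, and $\EE(\vert M(\eta_1)\vert)=\EE(\vert L(\xi_1)\vert)<\infty$. By \eqref{help26}, $L(\mathscr{A}_n^{f}(\xi_1,\ldots,\xi_n))=M(\bar\eta_n)$ with $\bar\eta_n:=\frac1n\sum_{i=1}^n\eta_i$. So the claim reduces to the arithmetic-mean case. We must show that $\big(\EE(M(\bar\eta_n))-\frac{\mu(1-\varrho)}{n}\sigma^2\big)_{n\in\NN}$ is decreasing, where $\sigma^2:=\DD^2(\eta_1)$. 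Finiteness from above, i.e.\ $\EE(M(\bar\eta_n))\in[-\infty,\infty)$, follows from Theorem \ref{Thm_monotonicity}, since $M$ is convex.

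Since $M$ is strongly convex with modulus $\mu$, the function $\widetilde M(x):=M(x)-\mu x^2$ is convex on $J$. Moreover $\EE(\eta_1^2)<\infty$ because $\sigma^2<\infty$, so $\EE(\vert\widetilde M(\eta_1)\vert)<\infty$. Theorem \ref{Thm_Mat_Gar_1} applied to $\widetilde M$ and $(\eta_n)$ then shows that $\big(\EE(M(\bar\eta_n))-\mu\EE(\bar\eta_n^2)\big)_{n\in\NN}$ is decreasing. Here $\EE(\bar\eta_n^2)<\infty$, so the subtraction is legitimate in $[-\infty,\infty)$.

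It remains to compute $\EE(\bar\eta_n^2)$ using exchangeability. All $\eta_i$ have mean $m$ and variance $\sigma^2$, and all pairs have covariance $\varrho\sigma^2$ by Proposition \ref{Pro1}. Hence
\[
\EE(\bar\eta_n^2)=m^2+\frac{1}{n^2}\big(n\sigma^2+n(n-1)\varrho\sigma^2\big)=m^2+\varrho\sigma^2+\frac{(1-\varrho)\sigma^2}{n}.
\]
Therefore $\EE(M(\bar\eta_n))-\mu\EE(\bar\eta_n^2)$ differs from the sequence in the statement only by the $n$-independent constant $\mu(m^2+\varrho\sigma^2)$, and the claim follows. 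For $n=1$ the formula still holds, since the $\varrho$ terms cancel.

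The main technical points are verifying the integrability needed to split $\EE(\widetilde M(\bar\eta_n))=\EE(M(\bar\eta_n))-\mu\EE(\bar\eta_n^2)$, and checking that $\varrho$ is well defined. Both follow because $\sigma^2\in(0,\infty)$ and $\bar\eta_n^2$ is integrable.
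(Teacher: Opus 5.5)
Your proof is correct, but it takes a different route from the paper's. The paper performs the same reduction to $(f(\xi_n))_{n\in\NN}$ and $L\circ f^{-1}$. It then simply invokes Theorem 4 of Mattei and Garreau, the arithmetic-mean case for strongly convex losses. That theorem gives the one-step inequality $\EE(L(\mathscr{A}_n^{f}(\xi_1,\ldots,\xi_n)))\leq\EE(L(\mathscr{A}_{n-1}^{f}(\xi_1,\ldots,\xi_{n-1})))-\frac{\mu(1-\varrho)}{n(n-1)}\DD^2(f(\xi_1))$, and the paper rearranges it via $\frac{1}{n(n-1)}=\frac{1}{n-1}-\frac{1}{n}$.

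You instead prove the arithmetic-mean case yourself. You split $M=\widetilde M+\mu x^2$ with $\widetilde M$ convex, apply only the Marshall--Proschan result (Theorem \ref{Thm_Mat_Gar_1}) to $\widetilde M$, and use the exact identity $\EE(\bar\eta_n^2)=m^2+\varrho\sigma^2+(1-\varrho)\sigma^2/n$. Your integrability bookkeeping is exactly what justifies splitting the expectation in $[-\infty,\infty)$: the positive part of $\widetilde M(\bar\eta_n)$ is integrable, and so is $\mu\bar\eta_n^2$.

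What your route buys is that it is self-contained within the paper. It also makes transparent that the correction term is precisely $\mu$ times the decrease of $\DD^2(\bar\eta_n)$. This explains both its $1/n$ form and why it depends only on the law of $(f(\xi_1),f(\xi_2))$, as in Remark \ref{Rem_strongly_convex_mod_loss}(ii).

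Its limitation is that it rests on the identity \eqref{convex_2}, which is special to the square. So the trick does not extend to general error functions $\alpha$. For those, the paper needs the leave-one-out Jensen argument of Theorem \ref{Thm_monotonicity_alpha}, based on Corollary \ref{Cor_strong_alpha_convex}. For $\alpha(t)=\mu t^2$ that argument also recovers the present theorem.
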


\begin{proof}
We can apply Theorem 4 in Mattei and Garreau \cite{MatGar}, 
 where the statement was proved for arithmetic mean, 
 which is a quasi-arithmetic mean corresponding to the identity function.
Since $I$ is a nonempty open interval and $f$ is strictly increasing and continuous,
 we have that $f(I)$ is also a nonempty open interval.
Since $f$ is continuous, $f(\xi_n)$, $n\in\NN$, are random variables as well, 
 and the exchangeability of $(\xi_n)_{n\in\NN}$ yields that of $(f(\xi_n))_{n\in\NN}$ (see part (ii) of Proposition \ref{Pro1}).
By applying Theorem 4 in Mattei and Garreau \cite{MatGar} in dimension 1
 for the nonempty open interval $f(I)$, 
 the exchangeable sequence of random variables $(f(\xi_n))_{n\in\NN}$ and for $L\circ f^{-1}$ being a strongly convex function with modulus $\mu$, 
 with the help of \eqref{help26},  
  for each $n\in\NN\setminus\{1\}$, we obtain that $\EE\big(L\big( \mathscr{A}_n^{f}(\xi_1,\ldots,\xi_n) \big)\big)\in[-\infty,\infty)$ and
 \[
   \EE\big(L\big( \mathscr{A}_n^{f}(\xi_1,\ldots,\xi_n) \big)\big)
       \leq \EE\big(L\big( \mathscr{A}_{n-1}^{f}(\xi_1,\ldots,\xi_{n-1}) \big)\big)
             - \frac{\mu(1-\varrho) }{n(n-1)}\DD^2(f(\xi_1)).
 \]
 By rearranging this inequality, we have that 
 \[
 \EE\big(L\big( \mathscr{A}_n^{f}(\xi_1,\ldots,\xi_n) \big)\big)
     - \frac{\mu(1-\varrho)}{n}\DD^2(f(\xi_1))
       \leq \EE\big(L\big( \mathscr{A}_{n-1}^{f}(\xi_1,\ldots,\xi_{n-1}) \big)\big)
             - \frac{\mu(1-\varrho) }{n-1}\DD^2(f(\xi_1))
 \]
 for each $n\in\NN\setminus\{1\}$, yielding the statement.
\end{proof}

Theorem \ref{Thm_strict_monotonicity} has the following simple consequence.

\begin{Rem}\label{Rem_strongly_convex_mod_loss}
(i).
Under the conditions of Theorem \ref{Thm_strict_monotonicity},
 if, in addition, $\EE\big(L\big( \mathscr{A}_n^{f}(\xi_1,\ldots,\xi_n) \big)\big)>-\infty$, $n\in\NN\setminus\{1\}$, $\mu>0$,
  and $\varrho\ne 1$, then the sequence
 $\Big(\EE\big(L\big( \mathscr{A}_n^{f}(\xi_1,\ldots,\xi_n) \big)\big)\Big)_{n\in\NN}$ is strictly decreasing.
Indeed, for each $n\in\NN$, it holds that
 \begin{align*}
   \EE\big(L\big( \mathscr{A}_{n+1}^{f}(\xi_1,\ldots,\xi_{n+1}) \big)\big)
       - \frac{\mu(1-\varrho)}{n+1}\DD^2(f(\xi_1))
    \leq \EE\big(L\big( \mathscr{A}_n^{f}(\xi_1,\ldots,\xi_n) \big)\big)-\frac{\mu(1-\varrho)}{n}\DD^2(f(\xi_1)),
 \end{align*}
 yielding that 
\begin{align*}
\EE\big(L\big( \mathscr{A}_{n+1}^{f}(\xi_1,\ldots,\xi_{n+1}) \big)\big)
 - \EE\big(L\big( \mathscr{A}_n^{f}(\xi_1,\ldots,\xi_n) \big)\big)
  \leq - \mu(1-\varrho) \DD^2(f(\xi_1)) \frac{1}{n(n+1)}<0.
\end{align*}

(ii). The correction term $\frac{\mu(1-\varrho)}{n}\DD^2(f(\xi_1))$ in Theorem \ref{Thm_strict_monotonicity} 
 depends only on the joint distribution of $f(\xi_1)$ and $f(\xi_2)$.
\proofend 
\end{Rem}

Note that Theorem \ref{Thm_strict_monotonicity} with $\mu=0$ gives back
Theorem \ref{Thm_monotonicity} under the additional condition $\DD^2(f(\xi_1))\in(0,\infty)$.

In the next example, we specialize Theorem \ref{Thm_strict_monotonicity}
 to the case of geometric mean and a sequence of i.i.d.\
 random variables, and, in the second part of this example, we make a further specialization by choosing the uniform distribution
 on $(0,1)$ as the common distribution of the i.i.d.\ sequence of random variables in question.

\begin{Ex}\label{Ex_Sect5}
We apply Theorem \ref{Thm_strict_monotonicity} in the following setting: $I:=(0,\infty)$, $f:(0,\infty)\to\RR$, $f(x):=\ln(x)$, $x>0$, and $(\xi_n)_{n\in\NN}$ is a sequence of independent and identically distributed positive random variables
 such that $\EE((\ln(\xi_1))^2)<\infty$ and $\PP(\xi_1=\EE(\xi_1))<1$.
Then $\ln(I)=\RR$ and $\DD^2(\ln(\xi_1))\in(0,\infty)$.
Let $L:I\to\RR$, $L(x):=(\ln(x))^2$, $x\in I$.
Then $\EE(\vert L(\xi_1)\vert) = \EE((\ln(\xi_1))^2)<\infty$, and
 the function $\RR\ni x\mapsto (L\circ \ln^{-1})(x) = x^2$ is strongly convex with modulus $\mu=1$
 (see the paragraph after Definition \ref{Def_strongly_convex_3}), and   
 \[
   \cA_n^{\ln}(\xi_1,\ldots,\xi_n) = (\xi_1\cdots\xi_n)^{\frac{1}{n}},\qquad n\in\NN.
 \]
By Theorem \ref{Thm_strict_monotonicity} and taking into account that $\varrho=0$, we have that
 \[
    \EE\Big[ \big( \ln\big( (\xi_1\cdots\xi_n)^{\frac{1}{n}} \big) \big)^2 \Big]
     = \frac{1}{n^2} \EE\left(  \left(  \sum_{i=1}^n \ln(\xi_i) \right)^2  \right)
     <\infty, \qquad n\in\NN,
 \]
 and that the sequence  
 \begin{align}\label{help_strong_mod1_pelda_1}
 \begin{split}
     \left( \frac{1}{n^2} \EE\left(  \left(  \sum_{i=1}^n \ln(\xi_i) \right)^2  \right)  - \frac{1}{n} \DD^2(\ln(\xi_1)) \right)_{n\in\NN}
 \end{split}                    
 \end{align}
 is decreasing.
As a consequence, the sequence $\left( \frac{1}{n^2} \EE\left(  \left(  \sum_{i=1}^n \ln(\xi_i) \right)^2  \right)  \right)_{n\in\NN}$
 is strictly decreasing in accordance with Remark \ref{Rem_strongly_convex_mod_loss}.
 
Next, we specialize \eqref{help_strong_mod1_pelda_1} to the case when $\xi_1$ is uniformly distributed on the interval $(0,1)$. 
Then $-\ln(\xi_1)$ is exponentially distributed with parameter $1$, since 
 $\PP(-\ln(\xi_1) < x)=0$ for all $x\leq0$, and
 \begin{align*}
 \PP(-\ln(\xi_1) < x) = 1 - \PP(\ln(\xi_1) \leq -x)
                      = 1 - \PP(\xi_1 \leq \ee^{-x})
                      = 1 - \ee^{-x}, \qquad x>0.
 \end{align*}
It implies that $\sum_{i=1}^n (-\ln(\xi_i))$, $n\in\NN$, is Gamma distributed with parameters $n$ and $1$, and hence
 \[
    \EE\left(\left( \sum_{i=1}^n (-\ln(\xi_i)) \right)^2\right)
        = \frac{\Gamma(n+2)}{\Gamma(n)} 
        = n(n+1),\qquad n\in\NN,
 \]  
 where $\Gamma$ denotes the Gamma function.
Moreover,
 \begin{align}\label{help25}
  \EE( (\ln(\xi_1))^n ) = (-1)^n \frac{\Gamma(1+n)}{\Gamma(1)} = (-1)^n n!,\qquad n\in\NN,
 \end{align} 
 yielding that 
 \begin{align}\label{help_Var_Exp}
 \DD^2( \ln(\xi_1)) = \EE( (\ln(\xi_1))^2 ) - (\EE( \ln(\xi_1) ))^2 = (-1)^22-(-1)^2=1.
 \end{align}
Therefore, in the considered special case, \eqref{help_strong_mod1_pelda_1} means that the sequence 
 $\big(\frac{n+1}{n} - \frac{1}{n} \big)_{n\in\NN} = (1)_{n\in\NN}$ is decreasing, which readily holds.
Further, in the considered special case, one can also see that
there does not exist a constant $c>\DD^2( \ln(\xi_1))=1$ such that
 the sequence
\[
  \left( \frac{1}{n^2} 
   \EE\left(  \left(  \sum_{i=1}^n \ln(\xi_i) \right)^2  \right)  - \frac{c}{n} \right)_{n\in\NN}
\]
is decreasing, since, if $c>1$, the sequence $\frac{n+1}{n} - \frac{c}{n} = 1 + \frac{1-c}{n}$, $n\in\NN$,
 is strictly increasing.
\proofend
\end{Ex}

\section{Results for strongly convex loss functions with respect to a nonnegative even error function}\label{Sec_convex_loss_3}

The following theorem is the main result of the paper.

\begin{Thm}\label{Thm_monotonicity_alpha}
Let $I\subseteq \RR$ be a nonempty open interval, $f:I\to\RR$ be a continuous and strictly increasing function, and
 $\alpha:(f(I)-f(I))\to\RR_+$ be a Borel measurable even function.
Further, let $(\xi_n)_{n\in\NN}$ be a sequence of exchangeable random variables with values in $I$, 
 and $L:I\to\RR$ be a function such that the function $L\circ f^{-1}: f(I)\to\RR$ is strongly $\alpha$-convex and $\EE(\vert L(\xi_1)\vert)<\infty$.
Then the following assertions hold:
 \begin{itemize}
  \item[(i)] For each $n\in\NN\setminus\{1\}$, we have that $\EE\big(L\big( \mathscr{A}_n^{f}(\xi_1,\ldots,\xi_n) \big)\big)\in[-\infty,\infty)$ and
 \begin{align}\label{help6}
  \begin{split}
   \EE\big(L\big( \mathscr{A}_n^{f}(\xi_1,\ldots,\xi_n) \big)\big)
   &\leq \EE\big(L\big( \mathscr{A}_{n-1}^{f}(\xi_1,\ldots,\xi_{n-1}) \big)\big) \\
   &\phantom{\leq\;} - \EE\left( \alpha\left( \frac{1}{n(n-1)}\sum_{i=1}^{n-1} f(\xi_i) - \frac{1}{n}f(\xi_n) \right)\right).
  \end{split}
 \end{align}
 \item[(ii)]
The sequence $\Big(\EE\big(L\big( \mathscr{A}_n^{f}(\xi_1,\ldots,\xi_n) \big)\big)\Big)_{n\in\NN}$ is decreasing.
 \item[(iii)] If, in addition, 
            \begin{itemize}
              \item[(1)] $\DD^2(f(\xi_1))\in(0,\infty)$,
              \item[(2)] $\cov(f(\xi_1),f(\xi_2))\leq 0$,
              \item[(3)] $\alpha$ is strictly convex,
              \item[(4)] $\EE\left( \alpha\left( \frac{1}{2}(f(\xi_1) - f(\xi_2)) \right)\right)<\infty$,
              \item[(5)] $\EE\big(L\big( \mathscr{A}_n^{f}(\xi_1,\ldots,\xi_n) \big)\big)>-\infty$, $n\in\NN$,
            \end{itemize}
            then the sequence $\Big(\EE\big(L\big( \mathscr{A}_n^{f}(\xi_1,\ldots,\xi_n) \big)\big)\Big)_{n\in\NN}$ is strictly decreasing.
 \end{itemize}
\end{Thm}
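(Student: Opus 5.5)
Write $\eta_i:=f(\xi_i)$, $\Phi:=L\circ f^{-1}$, $S_n:=\frac1n\sum_{i=1}^n\eta_i$. By \eqref{help26} we have $L(\mathscr{A}_n^f(\xi_1,\ldots,\xi_n))=\Phi(S_n)$, and by part (ii) of Proposition \ref{Pro1} the sequence $(\eta_n)$ is exchangeable. So everything reduces to arithmetic means of $(\eta_n)$ with the strongly $\alpha$-convex function $\Phi$ on $f(I)$.

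\emph{Step 1 (pointwise inequality).} For part (i), write $S_n$ as the convex combination $S_n=\frac1n\sum_{k=1}^n S_{n-1}^{(k)}$, where $S_{n-1}^{(k)}:=\frac1{n-1}\sum_{i\ne k}\eta_i$ is the leave-one-out mean. Applying Corollary \ref{Cor_strong_alpha_convex} with $t_k=1/n$ gives
\[
\Phi(S_n)\leq\frac1n\sum_{k=1}^n\Phi\big(S_{n-1}^{(k)}\big)-\frac1n\sum_{k=1}^n\alpha\big(S_{n-1}^{(k)}-S_n\big).
\]
A direct computation gives $S_{n-1}^{(k)}-S_n=\frac{1}{n(n-1)}\sum_{i\ne k}\eta_i-\frac1n\eta_k$.

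\emph{Step 2 (expectations).} By part (i) of Proposition \ref{Pro1}, each vector $(\eta_i)_{i\ne k}$ has the law of $(\eta_1,\dots,\eta_{n-1})$. Hence each $\Phi(S_{n-1}^{(k)})$ has the law of $\Phi(S_{n-1})$. Similarly, by permuting indices, each $\alpha$-term has the law of $\alpha\big(\frac1{n(n-1)}\sum_{i<n}\eta_i-\frac1n\eta_n\big)$; here the Borel measurability of $\alpha$ is needed. Taking expectations gives \eqref{help6}. The $\alpha$-terms are nonnegative, so their expectations exist in $[0,\infty]$.

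\emph{Integrability.} This is the delicate point. Induct on $n$ to show that $\EE(\Phi(S_n))\in[-\infty,\infty)$, i.e.\ that $\EE(\Phi(S_n)^+)<\infty$. Since $\alpha\ge0$, Step 1 yields $\Phi(S_n)^+\le\frac1n\sum_k\Phi(S_{n-1}^{(k)})^+$. The base case is $\EE|\Phi(\eta_1)|=\EE|L(\xi_1)|<\infty$. Taking expectations of the pointwise inequality is then legitimate: the positive parts on the right are integrable, and every term is either integrable or has expectation $-\infty$. If $\EE\Phi(S_{n-1})=-\infty$, then $\EE\Phi(S_n)=-\infty$ as well, and \eqref{help6} holds trivially with the convention $-\infty-c=-\infty$. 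This bookkeeping of extended-real expectations is the main technical obstacle in (i).

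\emph{Part (ii)} follows immediately from (i), since $\alpha\ge0$.

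\emph{Part (iii).} By (5), all expectations are finite, so it suffices to show that
\[
\EE\,\alpha\Big(\frac1{n(n-1)}\sum_{i<n}\eta_i-\frac1n\eta_n\Big)>0 .
\]
Suppose instead that it vanishes, so $\alpha(Z_n)=0$ a.s., where $Z_n$ is the argument above. I would use the strict convexity of the even function $\alpha$ with $\alpha\ge0$: we get $\alpha(0)\le\alpha(u)$, with $\alpha$ strictly increasing on $[0,\infty)$ and $0$ the unique minimizer. To pin down the value, (4) together with the even convexity should force $\alpha(0)=0$ (a strongly $\alpha$-convex $\Phi$ exists, so Corollary \ref{Cor_MakNikPal1}(i) already gives $\alpha(0)=0$). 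Then $\alpha(u)>0$ for $u\neq0$, so $Z_n=0$ a.s. This gives $\DD^2(Z_n)=0$.

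Now compute $\DD^2(Z_n)$ using exchangeability and (1). Writing $\sigma^2:=\DD^2(\eta_1)$ and $c:=\cov(\eta_1,\eta_2)\le0$ by (2),
\[
\DD^2(Z_n)=\frac{\sigma^2}{n^2(n-1)}+\frac{(n-2)c}{n^2(n-1)}+\frac{\sigma^2}{n^2}-\frac{2c}{n^2}.
\]
The $c$-coefficient here is $\frac{n-2-2(n-1)}{n^2(n-1)}<0$, so $\DD^2(Z_n)\ge\frac{\sigma^2}{n^2}>0$. This contradicts $\DD^2(Z_n)=0$, so the inequality in \eqref{help6} is strict.

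Condition (4) appears to be needed only to guarantee finiteness where required, e.g.\ via $Z_2=\frac12(\eta_1-\eta_2)$. It also covers the case where the variance argument must be phrased via $\alpha$ rather than moments. The main obstacle is therefore Step 2's integrability, together with cleanly extracting $Z_n=0$ a.s.\ from strict convexity.
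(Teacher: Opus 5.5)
Your proposal is correct. Parts (i) and (ii) follow the paper's route. You use the same leave-one-out decomposition fed into Corollary \ref{Cor_strong_alpha_convex}, the same identity $S_{n-1}^{(k)}-S_n=\frac{1}{n(n-1)}\sum_{i\ne k}\eta_i-\frac1n\eta_k$, and the same distributional identities from Proposition \ref{Pro1}.

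The only difference there is bookkeeping. The paper gets $\EE(\Phi(S_n))\in[-\infty,\infty)$ in one step from the plain Jensen bound $\Phi(S_n)\le\frac1n\sum_k\Phi(\eta_k)$, whose right-hand side is integrable. You instead induct on $n$ through positive parts, and that also works. You should add that $\Phi$ is convex on the open interval $f(I)$, hence continuous and Borel measurable, since your equal-in-law claims need this.

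Part (iii) is where you genuinely diverge. The paper proceeds in three steps:
\begin{itemize}
\item It shows $\PP(Z_n=0)<1$ by a covariance argument: $Z_n=0$ a.s.\ would force $\cov(f(\xi_1),f(\xi_2))=\DD^2(f(\xi_1))>0$.
\item It applies the strict Jensen inequality to the strictly convex $\alpha$, giving $\EE(\alpha(Z_n))>\alpha(\EE(Z_n))=\alpha(0)=0$.
\item It uses convexity and (4) to show separately that $\EE(\alpha(Z_n))\le\frac2n\EE(\alpha(\frac{f(\xi_1)-f(\xi_2)}{2}))<\infty$.
\end{itemize}

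You argue by zero sets instead. $\EE(\alpha(Z_n))=0$ forces $\alpha(Z_n)=0$ a.s., hence $Z_n=0$ a.s., because an even strictly convex $\alpha$ with $\alpha(0)=0$ vanishes only at $0$. This contradicts $\DD^2(Z_n)>0$; your formula simplifies to $\DD^2(Z_n)=\frac{\sigma^2-c}{n(n-1)}$, which is the Mattei--Garreau correction term.

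Your route is shorter and more general in two respects:
\begin{itemize}
\item It uses only that $\alpha(u)>0$ for $u\ne0$, not strict convexity.
\item It never uses (4). With (5) and part (i), all the expectations $\EE(L(\mathscr{A}_n^f(\xi_1,\ldots,\xi_n)))$ are finite, so $\EE(\alpha(Z_n))\in(0,\infty]$ already yields strict decrease. In fact (4) follows from (5) at $n=2$, as Remark \ref{Rem_condiv_v} observes.
\end{itemize}
What the paper's route gives in exchange is the explicit upper bound $\frac2n\EE(\alpha(\frac{f(\xi_1)-f(\xi_2)}{2}))$ on the correction term.

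Two small corrections to your write-up. First, (4) together with even convexity does not yield $\alpha(0)=0$; the constant function $1$ is a counterexample. The right reason is your parenthetical one via Corollary \ref{Cor_MakNikPal1}(i), or simply setting $x=y$ in the definition of strong $\alpha$-convexity. Second, your closing remark that (4) ``covers the case where the variance argument must be phrased via $\alpha$'' has no content, since (4) plays no role anywhere in your argument.
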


\begin{proof}
(i).
In what follows, let $n\in\NN\setminus\{1\}$ be arbitrarily fixed, and $\Lambda:=L\circ f^{-1}$.
Since $I$ is a nonempty open interval and $f$ is strictly increasing and continuous,
 we have that $f(I)$ is also a nonempty open interval.
Hence $\Lambda$ is continuous and, in particular, is Borel measurable,
 since a strongly $\alpha$-convex function is convex (due to the fact that $\alpha$ is nonnegative),
 and a convex function defined on an open interval is continuous.
Since $f(I)$ is a nonempty open interval and $\Lambda$ is strongly $\alpha$-convex, by Corollary \ref{Cor_strong_alpha_convex},
 we have that the inequality
 \begin{align}\label{strongly_convex_ineq4}
  \Lambda\left( \frac{1}{n}\sum_{k=1}^n y_k \right)
     \leq \frac{1}{n}\sum_{k=1}^n \Lambda(y_k)
          - \frac{1}{n} \sum_{k=1}^n \alpha \left(y_k - \frac{1}{n}\sum_{j=1}^n y_j \right)
     \leq \frac{1}{n}\sum_{k=1}^n \Lambda(y_k)
 \end{align}
 holds for all $y_1,\ldots,y_n\in f(I)$.
By choosing
 \[
   y_k:=\frac{1}{n-1}\sum_{i\in\{1,\ldots,n\}\setminus\{k\} } f(\xi_i), \qquad k\in\{1,\ldots,n\},
 \]
 in the first inequality in \eqref{strongly_convex_ineq4}, we have that $y_1+\cdots+y_n = f(\xi_1)+\cdots+f(\xi_n)$ and
 \begin{align}\label{strongly_convex_ineq5}
  \begin{split}
  \Lambda\left( \frac{1}{n}\sum_{k=1}^n f(\xi_k) \right) 
  &\leq \frac{1}{n}\sum_{k=1}^n \Lambda\left( \frac{1}{n-1}\sum_{i\in\{1,\ldots,n\}\setminus\{k\}} f(\xi_i) \right)\\
  &\;\; - \frac{1}{n} \sum_{k=1}^n \alpha\left( \frac{1}{n-1}\sum_{i\in\{1,\ldots,n\}\setminus\{k\}} f(\xi_i) - \frac{1}{n}\sum_{j=1}^n f(\xi_j) \right).
 \end{split}
 \end{align}
By choosing $y_k:=f(\xi_k)$, $k\in\{1,\ldots,n\}$, in the second inequality in \eqref{strongly_convex_ineq4}, we obtain that
 \begin{align}\label{strongly_convex_ineq5_b}
  \Lambda\left( \frac{1}{n}\sum_{k=1}^n f(\xi_k) \right)
     \leq \frac{1}{n} \sum_{k=1}^n \Lambda(f(\xi_k)).
 \end{align} 
Since $\Lambda$ is Borel measurable, we have that $\Lambda(f(\xi_k))$, $k\in\{1,\ldots,n\}$, 
 and $\Lambda\left( \frac{1}{n}\sum_{k=1}^n f(\xi_k) \right)$, $n\in\NN$, are random variables.
Using that $\EE(\vert L(\xi_1)\vert)<\infty$ and that $\xi_1,\ldots,\xi_n$ have the same distributions (due to their exchangeability),
 by taking the expectation of both sides of the inequality \eqref{strongly_convex_ineq5_b}, we obtain
 \begin{align*}
   \EE\left(\Lambda\left(\frac{1}{n}\sum_{k=1}^n f(\xi_k) \right) \right) 
     \leq  \frac{1}{n} \sum_{k=1}^n \EE(\Lambda(f(\xi_k))) 
      = \frac{1}{n} \sum_{k=1}^n  \EE(L(\xi_k)) 
      = \EE(L(\xi_1)).
 \end{align*}
In particular, it implies that
 \begin{align}\label{help15}
 \EE\left(\Lambda\left( \frac{1}{n}\sum_{k=1}^n f(\xi_k) \right)\right) \in[-\infty,\infty).
 \end{align}
Similarly to \eqref{help15}, we also have that
 \begin{align}\label{help29}
   \EE\left( \Lambda\left( \frac{1}{n-1}\sum_{ i\in\{1,\ldots,n\}\setminus\{k\}} f(\xi_i) \right)\right)
      \in[-\infty,\infty), \qquad k\in\{1,\ldots,n\}.
 \end{align}
Since the sequence $(\xi_\ell)_{\ell\in\NN}$ is exchangeable and $f$ is Borel measurable,
 we have that the sequence $(f(\xi_\ell))_{\ell\in\NN}$ is exchangeable as well, see part (ii) of  Proposition \ref{Pro1}.
Together with part (iii) of Proposition \ref{Pro1} with the choices $\lambda_i:=\frac{1}{n-1}$, $i\in\{1,\ldots,n-1\}$,
 and $\lambda_n:=-1$, it implies that, for each $k\in\{1,\ldots,n\}$, the random variables
 \begin{align*}
   \frac{1}{n-1}\sum_{i\in\{1,\ldots,n\}\setminus\{k\}} f(\xi_i)-f(\xi_k)
  \qquad \text{and} \qquad \frac{1}{n-1}\sum_{i=1}^{n-1}f(\xi_i)-f(\xi_n)
 \end{align*}
have the same distributions.
Hence, using also that $\alpha$ is nonnegative, we get that
 \begin{align*}
  &\EE\left( \alpha\left( \frac{1}{n-1}\sum_{ i\in\{1,\ldots,n\}\setminus\{k\}} f(\xi_i)
                      - \frac{1}{n}\sum_{j=1}^n f(\xi_j) \right)\right) \\
  &\qquad = \EE\left( \alpha\left( \frac{1}{n(n-1)}\sum_{i\in\{1,\ldots,n\}\setminus\{k\}} f(\xi_i)
                             - \frac{1}{n}f(\xi_k) \right) \right)\\
  &\qquad = \EE\left( \alpha\left( \frac{1}{n(n-1)}\sum_{i=1}^{n-1} f(\xi_i) - \frac{1}{n}f(\xi_n) \right)\right)\in[0,\infty],
  \qquad k\in\{1,\ldots,n\}.
 \end{align*}
Therefore, using also \eqref{help29}, the expected value of the right hand side of the inequality \eqref{strongly_convex_ineq5} exists and it belongs to $[-\infty,\infty)$.
Hence, by taking the expectation of both sides of the inequality \eqref{strongly_convex_ineq5}, we get that
 \begin{align}\label{help5}
  \begin{split}
  &\EE\left(\Lambda\left( \frac{1}{n}\sum_{k=1}^n f(\xi_k) \right)\right)  \\
  &\qquad \leq \EE\left(\Lambda\left( \frac{1}{n-1}\sum_{i=1}^{n-1} f(\xi_i) \right)\right)
            - \EE \left( \alpha\left( \frac{1}{n(n-1)}\sum_{i=1}^{n-1} f(\xi_i)
                         - \frac{1}{n} f(\xi_n) \right) \right),
 \end{split}
 \end{align}
 yielding \eqref{help6}.
We note that the right hand side of the inequality \eqref{help5} may take the form $-\infty - \infty = -\infty$, implying that its left hand side is $-\infty$, as well.

(ii). Since strongly $\alpha$-convex functions are automatically convex
 (due to the nonnegativity of $\alpha$), by Theorem \ref{Thm_monotonicity},
 we readily have part (ii).
We also mention that part (i) implies part (ii) as well taking into account that  
 $\EE\Big( \alpha\Big( \frac{1}{n(n-1)}\sum_{i=1}^{n-1} f(\xi_i) - \frac{1}{n}f(\xi_n) \Big)\Big)\in[0,\infty]$,
 $n\in\NN\setminus\{1\}$.

(iii).
Suppose, in addition, that the assumptions (1)--(5) in part (iii) hold as well.
In view of part (ii) of Proposition \ref{Pro1}, we can see that $f(\xi_i)$, $i\in\NN$, are exchangeable, whence we get that $f(\xi_i)$, $i\in\NN$, are identically distributed. Hence we have that
 \begin{align}\label{help11}
   \EE\left( \frac{1}{n(n-1)}\sum_{i=1}^{n-1} f(\xi_i)
                      - \frac{1}{n}f(\xi_n) \right)
    = 0, \qquad n\in\NN\setminus\{1\}.
 \end{align}
We are now going to check that
 \begin{align}\label{help8}
    \PP\left( \frac{1}{n(n-1)}\sum_{i=1}^{n-1} f(\xi_i)
                      - \frac{1}{n}f(\xi_n) =0  \right)
     <1, \qquad n\in\NN\setminus\{1\}.
 \end{align}
Let $n\in\NN\setminus\{1\}$ be fixed. To the contrary, assume that the equality $\frac{1}{n(n-1)}\sum_{i=1}^{n-1} f(\xi_i)= \frac{1}{n} f(\xi_n)$ holds almost surely. 
Then, with the notation $\eta:=\frac{1}{n-1}\sum_{i=1}^{n-1} f(\xi_i)$, we have that $\eta = f(\xi_n)$ holds almost surely.
Then, by condition (2) (and also by (1)) and the exchangeability of $f(\xi_i)$, $i\in\NN$, we get that
\Eq{help30}{
  0&\geq \cov(f(\xi_1),f(\xi_2))
  =\frac{1}{n-1}\sum_{i=1}^{n-1}\cov(f(\xi_i),f(\xi_n))
  =\cov(\eta,f(\xi_n))\\
  &=\EE(\eta f(\xi_n))-\EE(\eta)\EE(f(\xi_n))
  =\EE((f(\xi_n))^2)-(\EE(f(\xi_n)))^2=\DD^2(f(\xi_n)).
}
On the other hand, by the assumption $(1)$ and the exchangeability of $f(\xi_i)$, $i\in\NN$, we have that $\DD^2(f(\xi_n))>0$, which contradicts the inequality \eqref{help30}.
This completes the proof of \eqref{help8}.

Using \eqref{help11}, \eqref{help8} and the strict convexity of $\alpha$ (assumption $(3)$),
 the Jensen inequality implies that
 \begin{align*}
  &\EE\left( \alpha\left( \frac{1}{n(n-1)}\sum_{i=1}^{n-1} f(\xi_i) - \frac{1}{n}f(\xi_n) \right)\right)\\
  &\qquad  > \alpha\left( \frac{1}{n(n-1)}\sum_{i=1}^{n-1} \EE(f(\xi_i)) - \frac{1}{n}\EE(f(\xi_n)) \right)
          =\alpha(0) =0, \qquad n\in\NN\setminus\{1\},
 \end{align*}
 where the last equality holds due to part (i) of Corollary \ref{Cor_MakNikPal1}.
Next, we check that the assumption $(4)$ yields that
 \begin{align}\label{help24}
 \EE\left( \alpha\left( \frac{1}{n(n-1)}\sum_{i=1}^{n-1} f(\xi_i) - \frac{1}{n}f(\xi_n) \right)\right)
   \in(0,\infty),\qquad  n\in\NN\setminus\{1\}.
 \end{align}
The convexity of $\alpha$ (assumption $(3)$) yields that
 \begin{align}\label{Rem_cond_e_elegseges}
  \begin{split}
  \alpha\left( \frac{1}{n(n-1)}\sum_{i=1}^{n-1} f(\xi_i) - \frac{1}{n}f(\xi_n) \right) 
  &  = \alpha\left( \frac{1}{n-1}\sum_{i=1}^{n-1} \frac{f(\xi_i) - f(\xi_n)}{n} \right) \\
  & \leq \frac{1}{n-1} \sum_{i=1}^{n-1} \alpha\left( \frac{f(\xi_i) - f(\xi_n)}{n} \right),
    \qquad n\in\NN\setminus\{1\}.
 \end{split}   
 \end{align} 
Since $(\xi_n)_{n\in\NN}$ are exchangeable, by part (ii) of Proposition \ref{Pro1},
 $(f(\xi_n))_{n\in\NN}$ are exchangeable as well, and therefore 
 $(f(\xi_1),f(\xi_2))$ and $(f(\xi_i),f(\xi_n))$, $i=1,\ldots,n-1$, have the same distributions, where $n\in\NN\setminus\{1\}$,
 yielding that
 \begin{align*}
   \EE \left( \alpha\left( \frac{f(\xi_i) - f(\xi_n)}{n} \right) \right)
      = \EE \left( \alpha\left( \frac{f(\xi_1) - f(\xi_2)}{n} \right) \right),
      \qquad i=1,\ldots,n-1,\quad n\in\NN\setminus\{1\}.
 \end{align*}
Consequently, by taking expectations of \eqref{Rem_cond_e_elegseges} side by side, we get that
  \begin{align}\label{help31}
  \begin{split}
  \EE\left(\alpha\left( \frac{1}{n(n-1)}\sum_{i=1}^{n-1} f(\xi_i) - \frac{1}{n}f(\xi_n) \right)\right)
  \leq \EE\left(\alpha\left( \frac{f(\xi_1) - f(\xi_2)}{n} \right)\right),
 \quad n\in\NN\setminus\{1\}.
 \end{split}
  \end{align}  
Using again the convexity of $\alpha$, we have that   
 \Eq{*}{
  \alpha\left(\! \frac{f(\xi_1)-f(\xi_2)}{n}\!\right)
   &=\alpha \left( \frac{2}{n}\cdot\frac{f(\xi_1)-f(\xi_2)}{2}+\frac{n-2}{n}\cdot 0\right)\\
   &\leq \frac{2}{n}\alpha \left(\! \frac{f(\xi_1)-f(\xi_2)}{2}\!\right) + \frac{n-2}{n}\alpha(0)
    = \frac{2}{n}\alpha \left(\! \frac{f(\xi_1)-f(\xi_2)}{2}\!\right),
    \quad n\in\NN\setminus\{1\},
   }
 where the last equality holds due to part (i) of Corollary \ref{Cor_MakNikPal1}.
Taking into account \eqref{help31} and the assumption $(4)$, we obtain that
 \[
   \EE\left(\alpha\left( \frac{1}{n(n-1)}\sum_{i=1}^{n-1} f(\xi_i) - \frac{1}{n}f(\xi_n) \right)\right) 
      \leq \frac{2}{n}  \EE\left(\alpha\left( \frac{f(\xi_1) - f(\xi_2)}{2} \right)\right)
       <\infty, 
 \]
 yielding \eqref{help24}, as desired.
 
To conclude, the statement of part (iii) follows by \eqref{help6} taking into account that all the three expectations involved in \eqref{help6} are finite, and the third one is positive (these facts are consequences of part (i), the assumption $(5)$ and \eqref{help24}).
\end{proof}

We note that if the error function $\alpha$ is of the form $\alpha(t):=\mu t^2$, $t\in f(I)-f(I)$, where $\mu\in\RR_+$,
 then the statement of part (i) of Theorem \ref{Thm_monotonicity_alpha} simplifies to that of Theorem \ref{Thm_strict_monotonicity} 
 under the additional assumption that $\DD^2(f(\xi_1))\in(0,\infty)$.
Indeed, by the paragraph after Definition \ref{Def_strongly_convex_2}, 
 a strongly convex function with such an error function is strongly convex with modulus $\mu$, and, by formulae (30) and (32) in Mattei and Garreau \cite{MatGar},
 the correction term on the right hand side of the inequality \eqref{help6} takes the form 
 $-\frac{\mu(1-\varrho)}{n(n-1)}\DD^2(f(\xi_1))\in(-\infty,0]$, $n\in\NN\setminus\{1\}$, where 
 $\varrho$ is the correlation coefficient of $f(\xi_1)$ and $f(\xi_2)$, yielding the statement of Theorem \ref{Thm_strict_monotonicity}
 (using that $\frac{1}{n(n-1)} = \frac{1}{n-1} - \frac{1}{n}$).

In the following remark, an alternative form and an estimation of the second term on the right hand side of the inequality \eqref{help6} can be found.

\begin{Rem}
(i). Suppose that the assumptions of Theorem \ref{Thm_monotonicity_alpha} hold.
The second term on the right hand side of the inequality \eqref{help6} can be written in another form, namely,
 \[
 \EE\left( \alpha\left( \frac{1}{n(n-1)}\sum_{i=1}^{n-1} f(\xi_i) - \frac{1}{n}f(\xi_n) \right)\right)
  =  \EE\left(\alpha\left(    \frac{1}{n-1} \left(  \frac{1}{n}\sum_{i=1}^n f(\xi_i) - f(\xi_k) \right) \right) \right)
 \]
 for each $n\in\NN\setminus\{1\}$ and $k\in\{1,\ldots,n\}$, since
 \begin{align*}
  \frac{1}{n(n-1)}\sum_{i=1}^{n-1} f(\xi_i) - \frac{1}{n}f(\xi_n) 
    = \frac{1}{n-1} \left(  \frac{1}{n}\sum_{i=1}^n f(\xi_i) - f(\xi_n) \right),\qquad n\in\NN\setminus\{1\},
 \end{align*}
 and the exchangeability of $(\xi_n)_{n\in\NN}$ yields that the distributions of 
  \[
     \frac{1}{n}\sum_{i=1}^n f(\xi_i) - f(\xi_n)
       = \frac{1}{n} \sum_{i=1}^{n-1} f(\xi_i) + \frac{1-n}{n} f(\xi_n),
  \]   
    and
  \[  
      \frac{1}{n}\sum_{i=1}^n f(\xi_i) - f(\xi_k)
        = \frac{1}{n} \sum_{i\in\{1,\ldots,n\}\setminus\{k\}} f(\xi_i) + \frac{1-n}{n} f(\xi_k) , \quad k\in\{1,\ldots,n\},
  \]
coincide (following from parts (ii) and (iii) of Proposition \ref{Pro1} with the choices $\lambda_i:=\frac{1}{n}$, $i=1,\ldots,n-1$, and $\lambda_n:=\frac{1-n}{n}$).
  
(ii). Under the assumptions of part (iii) of Theorem \ref{Thm_monotonicity_alpha},
 by its proof, the second term on the right hand side of the inequality \eqref{help6} can be upper estimated as follows:
  \[
   \EE\left(\alpha\left( \frac{1}{n(n-1)}\sum_{i=1}^{n-1} f(\xi_i) - \frac{1}{n}f(\xi_n) \right)\right) 
      \leq \frac{2}{n}  \EE\left(\alpha\left( \frac{f(\xi_1) - f(\xi_2)}{2} \right)\right),
       \qquad n\in\NN\setminus\{1\}.
 \]
\proofend
\end{Rem}

In the following remark, we provide a sufficient condition under which  the assumption $(4)$ in part (iii) of Theorem \ref{Thm_monotonicity_alpha} holds.

\begin{Rem}
Suppose that the assumptions of Theorem \ref{Thm_monotonicity_alpha} hold.
If, in addition, $\alpha$ is convex and $\EE(\widehat\alpha(f(\xi_1) - f(\xi_2)))<\infty$ (where $\widehat\alpha$ 
  is defined in \eqref{def_alphahat}), then the assumption $(4)$ in part (iii) of Theorem \ref{Thm_monotonicity_alpha} holds.
Indeed, using part (i) of Remark \ref{Rem_starlikeness}, $\alpha(0)=0$ (following from part (i) of Corollary \ref{Cor_MakNikPal1}) 
 and the inequality \eqref{uv}, we get that 
 \[
 \EE\left(\alpha\left( \frac{f(\xi_1) - f(\xi_2)}{2} \right)\right)
  \leq \EE\left(\widehat\alpha\left( \frac{f(\xi_1) - f(\xi_2)}{2} \right)\right)
  \leq \frac{1}{2} \EE\left(\widehat\alpha\left( f(\xi_1) - f(\xi_2) \right)\right)
  <\infty, 
 \]
 as desired.
 \proofend
 \end{Rem}

The following remark highlights the assumption $(2)$ in part (iii) of Theorem \ref{Thm_monotonicity_alpha} 
 in connection with exchangeability.

\begin{Rem}\label{Rem_condiii}
Under the assumptions of part (iii) of Theorem \ref{Thm_monotonicity_alpha}, the sequence $f(\xi_i)$, $i\in\NN$, is exchangeable (hence identically distributed) and pairwise nonpositively correlated.
However, in general, it does not hold that $f(\xi_i)$, $i\in\NN$, are independent, as the following counterexample shows (where, for simplicity, instead of $f(\xi_i)$, we write $\eta_i$).
Let $\be_1,\be_2,\be_3$ denote the natural basis in $\RR^3$, and let $(\eta_1,\eta_2,\eta_3)$ be a $3$-dimensional random vector such
 \[
   \PP((\eta_1,\eta_2,\eta_3) = \be_k) = \PP((\eta_1,\eta_2,\eta_3) = -\be_k) = \frac{1}{6},\qquad k\in\{1,2,3\}.
 \]
Then 
 \[
   \EE(\eta_i) = 1\cdot \frac{1}{6} + (-1)\cdot \frac{1}{6} + 0 \cdot\frac{4}{6} = 0,\qquad i\in\{1,2,3\},
 \]
 and, since $\eta_i\eta_j=0$ for $i\ne j$, $i,j\in\{1,2,3\}$, we have that 
 \[
  \cov(\eta_i,\eta_j) = \EE(\eta_i\eta_j) -  \EE(\eta_i)\EE(\eta_j) = 0 - 0\cdot 0=0, \qquad i\ne j, \quad i,j\in\{1,2,3\}.
 \]
Therefore, $\eta_1$, $\eta_2$ and $\eta_3$ are pairwise uncorrelated.
On the other hand, $\eta_1$, $\eta_2$ and $\eta_3$ are not independent, since, for example, 
 \[
   \PP(\eta_1=1)=\frac{1}{6},\qquad  \PP(\eta_2=1)=\frac{1}{6},\qquad \PP(\eta_1=1,\eta_2=1)=\PP(\emptyset)=0,
 \]
 yielding that $\PP(\eta_1=1,\eta_2=1)\ne \PP(\eta_1=1)\PP(\eta_2=1)$.
 
Finally, we check that $\eta_1$, $\eta_2$ and $\eta_3$  are exchangeable. For this, we are going to show that, for any permutation $\sigma$ of $(1,2,3)$, the distributions of $(\eta_1,\eta_2,\eta_3)$ and $(\eta_{\sigma(1)},\eta_{\sigma(2)},\eta_{\sigma(3)})$ coincide.
Let $\sigma$ be an arbitrary permutation of $(1,2,3)$. Let $S$ denote the set $\big\{\be_1,\be_2,\be_3,-\be_1,-\be_2,-\be_3\big\}$. Then, for any $(x_1,x_2,x_3)\in\RR^3$, the relation $(x_1,x_2,x_3)\in S$ holds if and only if $(x_{\sigma^{-1}(1)},x_{\sigma^{-1}(2)},x_{\sigma^{-1}(3)})\in S$, where $\sigma^{-1}$ denotes the inverse of $\sigma$.
Hence 
 \begin{align*}
 \PP( (\eta_1,\eta_2,\eta_3) = (x_1,x_2,x_3))
     = \begin{cases}
         \frac{1}{6} & \text{if $(x_1,x_2,x_3)\in S$,}\\
         0  & \text{if $(x_1,x_2,x_3)\in \RR^3\setminus S$.}
      \end{cases}
 \end{align*}
 and 
 \begin{align*}
 \PP( (\eta_{\sigma(1)},\eta_{\sigma(2)},\eta_{\sigma(3)}) = (x_1,x_2,x_3))
  & = \PP( (\eta_1,\eta_2,\eta_3) = (x_{\sigma^{-1}(1)},x_{\sigma^{-1}(3)},x_{\sigma^{-1}(3)})) \\
   & = \begin{cases}
         \frac{1}{6} & \text{if $(x_{\sigma^{-1}(1)},x_{\sigma^{-1}(2)},x_{\sigma^{-1}(3)})\in S$,}\\
         0  & \text{if $(x_{\sigma^{-1}(1)},x_{\sigma^{-1}(2)},x_{\sigma^{-1}(3)})\in \RR^3\setminus S$}
      \end{cases}\\
  & = \begin{cases}
         \frac{1}{6} & \text{if $(x_1,x_2,x_3)\in S$,}\\
         0  & \text{if $(x_1,x_2,x_3)\in \RR^3\setminus S$.}
      \end{cases} 
 \end{align*}
Hence 
 \[
   \PP( (\eta_1,\eta_2,\eta_3) = (x_1,x_2,x_3)) = \PP( (\eta_{\sigma(1)},\eta_{\sigma(2)},\eta_{\sigma(3)}) = (x_1,x_2,x_3))
 \] 
 for any $(x_1,x_2,x_3)\in\RR^3$, that is, $(\eta_1,\eta_2,\eta_3)$ and $(\eta_{\sigma(1)},\eta_{\sigma(2)},\eta_{\sigma(3)})$
 have the same distributions, as desired.
\proofend
\end{Rem}

In the next remark, we shed some light on the role of the assumptions (4) and (5) in part (iii) of Theorem \ref{Thm_monotonicity_alpha}.

\begin{Rem}\label{Rem_condiv_v}
The assumptions (4) and (5) in part (iii) of Theorem \ref{Thm_monotonicity_alpha}
 are in fact necessary as well in order that the sequence $\Big(\EE\big(L\big( \mathscr{A}_n^{f}(\xi_1,\ldots,\xi_n) \big)\big)\Big)_{n\in\NN}$ be strictly decreasing.
Indeed, the inequality \eqref{help6} with $n=2$ yields that 
 \[
 \EE\big(L\big( \mathscr{A}_2^{f}(\xi_1,\xi_2) \big)\big)
   \leq \EE(L(\xi_1)) - \EE\left( \alpha\left( \frac{1}{2}(f(\xi_1) - f(\xi_2)) \right)\right),
 \]
 where, by the assumption, $\EE(L(\xi_1))\in\RR$.
If $\EE\left( \alpha\left( \frac{1}{2}(f(\xi_1) - f(\xi_2)) \right)\right)$ were $\infty$, then 
 $\EE\big(L\big( \mathscr{A}_2^{f}(\xi_1,\xi_2) \big)\big)$ would be $-\infty$, yielding that 
 $\EE\big(L\big( \mathscr{A}_n^{f}(\xi_1,\ldots,\xi_n) \big)\big)$ would be $-\infty$ for all $n\geq 3$
 as well (following from \eqref{help6}).
Similarly, if $\EE\big(L\big( \mathscr{A}_{n_0}^{f}(\xi_1,\ldots,\xi_{n_0}) \big)\big)$ were $-\infty$
 for some $n_0\in\NN$, then, by  \eqref{help6}, $\EE\big(L\big( \mathscr{A}_n^{f}(\xi_1,\ldots,\xi_n) \big)\big)$ 
 would be $-\infty$ for all $n\geq n_0$ as well.
\proofend
\end{Rem}

In the next corollary, we specialize part (i) of Theorem \ref{Thm_monotonicity_alpha} to the case $\alpha(x):=x^4$, $x\in\RR$, and for a sequence of independent and identically distributed random variables $(\xi_n)_{n\in\NN}$.

\begin{Cor}\label{Cor_x4}
Let $I\subseteq \RR$ be a nonempty open interval, $f:I\to\RR$ be a continuous and strictly increasing function, and let $(\xi_n)_{n\in\NN}$ be a sequence of independent and identically distributed random variables with values in $I$
 such that $\EE((f(\xi_1))^4)<\infty$.
Further, let $L:I\to\RR$ be a function such that the function $L\circ f^{-1}: f(I)\to\RR$ is strongly $\alpha$-convex,
where $\alpha:(f(I)-f(I))\to\RR_+$, $\alpha(x):=x^4$, $x\in (f(I)-f(I))$, and $\EE(\vert L(\xi_1)\vert)<\infty$.
Then
 \begin{itemize}
  \item[(i)] $\EE\big(L\big( \mathscr{A}_n^{f}(\xi_1,\ldots,\xi_n) \big)\big)\in[-\infty,\infty)$, $n\in\NN\setminus\{1\}$,
  \item[(ii)] for each $n\in\NN\setminus\{1\}$,
          \begin{align*}
               &\EE\big(L\big( \mathscr{A}_n^{f}(\xi_1,\ldots,\xi_n) \big)\big)\\
               &\qquad \leq \EE\big(L\big( \mathscr{A}_{n-1}^{f}(\xi_1,\ldots,\xi_{n-1}) \big)\big) 
                 - \frac{1}{(n-1)^3n^3} \Big( (n^2-3n+3) M_4 + 3(2n-3) M_2^2 \Big),
           \end{align*}
           where $M_k := \EE ( ( f(\xi_1) - \EE(f(\xi_1)) )^k )$, $k\in\{2,4\}$.
 \end{itemize} 
\end{Cor}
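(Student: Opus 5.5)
The plan is to apply part (i) of Theorem \ref{Thm_monotonicity_alpha} with $\alpha(x):=x^4$, $x\in(f(I)-f(I))$, and then compute the correction term explicitly. The hypotheses are immediate. The function $\alpha$ is continuous, hence Borel measurable, even and nonnegative. An i.i.d.\ sequence is exchangeable. $L\circ f^{-1}$ is strongly $\alpha$-convex and $\EE(\vert L(\xi_1)\vert)<\infty$. So part (i) of Theorem \ref{Thm_monotonicity_alpha} gives assertion (i) together with inequality \eqref{help6}. It remains to show that
\[
 \EE\left(\left(\frac{1}{n(n-1)}\sum_{i=1}^{n-1} f(\xi_i)-\frac1n f(\xi_n)\right)^4\right)
 =\frac{(n^2-3n+3)M_4+3(2n-3)M_2^2}{(n-1)^3n^3}.
\]

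First, centre the variables. Put $\eta_i:=f(\xi_i)-\EE(f(\xi_1))$. These are i.i.d.\ with mean zero and finite fourth moment, since $\EE((f(\xi_1))^4)<\infty$. The coefficients $\frac{1}{n(n-1)}$ (taken $n-1$ times) and $-\frac1n$ sum to zero, so the argument of $\alpha$ equals
\[
 \frac{1}{n(n-1)}\left(\sum_{i=1}^{n-1}\eta_i-(n-1)\eta_n\right).
\]
Write $S:=\sum_{i=1}^{n-1}\eta_i$ and $T:=(n-1)\eta_n$. These are independent and have mean zero, so expanding $(S-T)^4$ leaves only
\[
 \EE(S^4)+6\,\EE(S^2)\EE(T^2)+\EE(T^4).
\]
The odd cross terms vanish because $\EE(T)=\EE(S)=0$, and the $S^3T$ and $ST^3$ terms vanish by independence.

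Next, use the standard moment formulas for sums of i.i.d.\ centred variables:
\[
 \EE(S^4)=(n-1)M_4+3(n-1)(n-2)M_2^2,\qquad \EE(S^2)=(n-1)M_2,
\]
\[
 \EE(T^2)=(n-1)^2M_2,\qquad \EE(T^4)=(n-1)^4M_4.
\]
Summing gives
\[
 (n-1)\big(1+(n-1)^3\big)M_4+3(n-1)\big(n-2+2(n-1)^2\big)M_2^2.
\]
Dividing by $n^4(n-1)^4$ and simplifying, I use $1+(n-1)^3=n(n^2-3n+3)$ and $n-2+2(n-1)^2=2n^2-3n=n(2n-3)$. One factor of $n$ and one factor of $n-1$ then cancel, which yields the claimed expression.

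There is no real obstacle. The only care needed is the bookkeeping in the fourth-moment expansion and the factorization that produces the stated closed form.
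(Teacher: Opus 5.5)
Your proposal is correct and follows essentially the same route as the paper. Both apply part (i) of Theorem \ref{Thm_monotonicity_alpha}, centre with $\eta_i=f(\xi_i)-\EE(f(\xi_1))$, expand $\big(\sum_{i=1}^{n-1}\eta_i-(n-1)\eta_n\big)^4$ using independence and zero means, insert $\EE(S^4)=(n-1)M_4+3(n-1)(n-2)M_2^2$, and simplify via $1+(n-1)^3=n(n^2-3n+3)$ and $n-2+2(n-1)^2=n(2n-3)$; your arithmetic checks out.
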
 

Before proving Corollary \ref{Cor_x4}, in the next remark, we point out the fact that
in some cases the correction term on the right hand side of the inequality in part (ii)
of Corollary \ref{Cor_x4} can be written in a simpler form, 
and we also present a weaker form the inequality in question.

\begin{Rem}
(i) In part (ii) of Corollary \ref{Cor_x4}, in case of $n=2$ and $n=3$, the correction term
 \begin{align}\label{help_Cor_identical}
     \frac{1}{(n-1)^3n^3} \Bigg[ (n^2-3n+3) M_4 + 3(2n-3) M_2^2  \Bigg]
 \end{align}    
 can be written as 
 \[
  \frac{1}{2^4} \EE((f(\xi_1)-f(\xi_2))^4) \qquad \text{and}\qquad \frac{1}{2^4 3^2} \EE((f(\xi_1)-f(\xi_2))^4),
 \]
 respectively.
Indeed, since $f(\xi_1)$ and $f(\xi_2)$ are independent and identically distributed, we have $\EE(f(\xi_1))=\EE(f(\xi_2))$, and hence, 
 by the multinomial theorem, 
 \begin{align*}
   \EE((f(\xi_1)&-f(\xi_2))^4)
    =\EE\Big([f(\xi_1)-\EE(f(\xi_1))]-[f(\xi_2)-\EE(f(\xi_2))])^4\Big)\\
   &= \EE([f(\xi_1)-\EE(f(\xi_1))]^4)
      + 4 \EE\big([f(\xi_1)-\EE(f(\xi_1))]^3[f(\xi_2)-\EE(f(\xi_2))]\big)\\
   &\phantom{=\;} + 6\EE\big([f(\xi_1)-\EE(f(\xi_1))]^2[f(\xi_2)-\EE(f(\xi_2))]^2\big)\\
   &\phantom{=\;} + 4 \EE\big([f(\xi_1)-\EE(f(\xi_1))][f(\xi_2)-\EE(f(\xi_2))]^3\big)
                  + \EE([f(\xi_2)-\EE(f(\xi_2))]^4)\\   
   &=2\EE([f(\xi_1)-\EE(f(\xi_1))]^4)+6\EE\big([f(\xi_1)-\EE(f(\xi_1))]^2\big)\EE\big([f(\xi_2)-\EE(f(\xi_2))]^2\big)\\
   &=2\EE([f(\xi_1)-\EE(f(\xi_1))]^4)+6(\DD^2(f(\xi_1)))^2
    =2 M_4+6 M_2^2.
 \end{align*}
Note also that for $n\geq 4$, $n\in\NN$, it does not hold that the correction term \eqref{help_Cor_identical} is a 
constant multiple of $\EE((f(\xi_1)-f(\xi_2))^4)$, since the equation $3(n^2-3n+3)=3(2n-3)$ has only two solutions $2$ and $3$.

(ii) Under the conditions of Corollary \ref{Cor_x4}, since $M_4\geq M_2^2$ (due to Lyapunov's inequality), we also 
 have that 
 \begin{align*}
  \EE\big(L\big( \mathscr{A}_n^{f}(\xi_1,\ldots,\xi_n) \big)\big)
   \leq \EE\big(L\big( \mathscr{A}_{n-1}^{f}(\xi_1,\ldots,\xi_{n-1}) \big)\big) 
                - \frac{n^2+3n-6}{(n-1)^3n^3} M_2^2
 \end{align*}
 for each $n\in\NN\setminus\{1\}$, where $n^2+3n-6>0$ for each $n\in\NN\setminus\{1\}$. 
In particular, this inequality shows that the sequence $(\EE\big(L\big( \mathscr{A}_n^{f}(\xi_1,\ldots,\xi_n) \big)\big))_{n\in\NN}$ 
 is strictly decreasing whenever $M_2\neq0$ (i.e., $f(\xi_1)$ is not a constant almost surely) 
 and the assumption (5) of part (iii) in Theorem \ref{Thm_monotonicity_alpha} holds.
\proofend
\end{Rem}

\noindent{\bf Proof of Corollary \ref{Cor_x4}}.
(i).
It readily follows from part (i) of Theorem \ref{Thm_monotonicity_alpha}.

(ii). We evaluate the term
 \[
  \EE\left( \alpha\left( \frac{1}{n(n-1)}\sum_{i=1}^{n-1} f(\xi_i) - \frac{1}{n}f(\xi_n) \right)\right),
    \qquad n\in\NN\setminus\{1\},
 \]
 in part (i) of Theorem \ref{Thm_monotonicity_alpha} for the function 
 $\alpha:(f(I)-f(I))\to\RR_+$, $\alpha(x)=x^4$, $x\in (f(I)-f(I))$.

Let $\eta_\ell:=f(\xi_\ell)-\EE(f(\xi_\ell))$, $\ell\in\NN$.
Since $f(\xi_\ell)$, $\ell\in\NN$,  are independent and identically distributed, we get that
  $\eta_\ell$, $\ell\in\NN$, are independent and identically distributed, and 
  $\EE(\eta_\ell)=0$, $\ell\in\NN$.
Hence, for each $n\in\NN\setminus\{1\}$, we obtain that
 \begin{align*}
  &\EE\left( \left( \frac{1}{n(n-1)}\sum_{i=1}^{n-1} f(\xi_i) - \frac{1}{n}f(\xi_n) \right)^4  \right)
   = \frac{1}{(n-1)^4 n^4}
      \EE\left( \left( \sum_{i=1}^{n-1} \eta_i -  (n-1) \eta_n \right)^4  \right)\\
   & = \frac{1}{(n-1)^4 n^4}
       \EE\Bigg[  \left( \sum_{i=1}^{n-1} \eta_i \right)^4
      -4(n-1)\left( \sum_{i=1}^{n-1} \eta_i \right)^3\eta_n
   + 6(n-1)^2 \left( \sum_{i=1}^{n-1} \eta_i \right)^2\eta_n^2 \\
   &\phantom{= \frac{1}{(n-1)^4 n^4} \EE\Bigg[}
       - 4(n-1)^3 \left( \sum_{i=1}^{n-1} \eta_i \right) \eta_n^3 +(n-1)^4 \eta_n^4
       \Bigg]  \\
   & = \frac{1}{(n-1)^4 n^4}
       \Bigg[ \EE\left( \left( \sum_{i=1}^{n-1} \eta_i \right)^4 \right)+ 6(n-1)^2 \EE\left( \left( \sum_{i=1}^{n-1} \eta_i \right)^2\right) \EE ( \eta_n^2 ) + (n-1)^4 \EE ( \eta_n^4 )
       \Bigg].
 \end{align*}
Here we get that
 \begin{align*}
  \EE\left( \left( \sum_{i=1}^{n-1} \eta_i \right)^2 \right)
   & = \sum_{i=1}^{n-1} \EE( \eta_i^2 )
    = (n-1)\EE( \eta_1^2 ),
 \end{align*}
 and, by the multinomial theorem, for each $n\in\NN\setminus\{1\}$,
 \begin{align*}
   \EE\left( \left( \sum_{i=1}^{n-1} \eta_i \right)^4 \right)
   &= \sum_{\substack{\ell_1+\cdots+\ell_{n-1}=4,\\[1mm]  \ell_1,\ldots,\ell_{n-1}\in\ZZ_+\setminus\{1\}}}
        \frac{4!}{\ell_1!\cdots\ell_{n-1}!}
         \EE\big( \eta_1^{\ell_1} \big)\cdots \EE\big( \eta_1^{\ell_{n-1}} \big)\\
  &= \binom{n-1}{2}\frac{4!}{2!2!}  \big(\EE\big( \eta_1^2 \big)\big)^2
             +\binom{n-1}{1} \frac{4!}{4!}  \EE\big( \eta_1^4 \big)\\
 &= 3(n-1)(n-2) (\EE( \eta_1^2 ))^2 + (n-1)\EE\big( \eta_1^4 \big).
 \end{align*}
Consequently, we get that
 \begin{align*}
  &\EE\left( \left( \frac{1}{n(n-1)}\sum_{i=1}^{n-1} f(\xi_i) - \frac{1}{n}f(\xi_n) \right)^4  \right)\\
  &\qquad = \frac{1}{(n-1)^4n^4} \Bigg[ 3(n-1)(n-2) (\EE( \eta_1^2 ))^2 + (n-1)\EE\big( \eta_1^4 \big) \\
  &\phantom{\qquad = \frac{1}{(n-1)^4n^4} \Bigg[}
          + 6(n-1)^3  (\EE( \eta_1^2 ))^2 + (n-1)^4 \EE ( \eta_n^4 )
              \Bigg] \\
 &\qquad = \frac{1}{(n-1)^3n^4} \Bigg[ (1 + (n-1)^3) \EE ( \eta_1^4 ) + 3n(2n-3)(\EE( \eta_1^2 ))^2  \Bigg]\\
 &\qquad= \frac{1}{(n-1)^3n^3} \Bigg[ (n^2-3n+3) M_4 + 3(2n-3) M_2^2  \Bigg].
 \end{align*}
Therefore, the statement of part (ii) follows by part (i) of Theorem \ref{Thm_monotonicity_alpha}.
\proofend

In the next example, we specialize Corollary \ref{Cor_x4} to the case of geometric mean and a sequence of i.i.d.\ random variables, and, in the second part of this example, we make a further specialization by choosing the uniform distribution on $(0,1)$ as the common distribution of the i.i.d.\ sequence of random variables in question.

\begin{Ex}\label{Ex_Sect5_uj}
Let $I:=(0,\infty)$, $f:I\to\RR$, $f(x):=\ln(x)$, $x\in I$, and
 $(\xi_n)_{n\in\NN}$ be a sequence of independent and identically distributed positive random variables
 such that $\EE((\ln(\xi_1))^4)<\infty$.
Then $f(I)=\RR$.
Let $L:I\to\RR$, $L(x):=(\ln(x))^4$, $x\in I$, and $\alpha:\RR\to\RR_+$, $\alpha(t):=t^4$, $t\in\RR$.
Then $\EE(\vert L(\xi_1)\vert) = \EE((\ln(\xi_1))^4)<\infty$, and, by Example \ref{Ex_power_strongly_convex}, 
 the function $\RR\ni x\mapsto (L\circ f^{-1})(x) = x^4$ is strongly $\alpha$-convex, and   
 \[
   \cA_n^f(\xi_1,\ldots,\xi_n) = (\xi_1\cdots\xi_n)^{\frac{1}{n}},\qquad n\in\NN.
 \]
By Corollary \ref{Cor_x4}, we have that
 \[
    \EE\Big[ \big( \ln\big( (\xi_1\cdots\xi_n)^{\frac{1}{n}} \big) \big)^4 \Big]
     = \frac{1}{n^4} \EE\left(  \left(  \sum_{i=1}^n \ln(\xi_i) \right)^4  \right)
     <\infty, \qquad n\in\NN,
 \]
 and 
 \begin{align}\label{help_strong_alpha_pelda_1}
 \begin{split}
   &\frac{1}{n^4} \EE\left(  \left(  \sum_{i=1}^n \ln(\xi_i) \right)^4  \right)\\
   &\quad\leq \frac{1}{(n-1)^4} \EE\left(  \left(  \sum_{i=1}^{n-1} \ln(\xi_i) \right)^4  \right) \\
   &\phantom{\quad\leq} - \frac{1}{(n-1)^3n^3} \Bigg[ (n^2 - 3n +3) \EE ( ( \ln(\xi_1) - \EE(\ln(\xi_1)) )^4 ) 
                     + 3(2n-3)(\DD^2(\ln(\xi_1)))^2  \Bigg]
 \end{split}                    
 \end{align}
 for each $n\in\NN\setminus\{1\}$.

Next, we specialize the inequality \eqref{help_strong_alpha_pelda_1} when $\xi_1$ is uniformly distributed on the interval $(0,1)$. 
In Example \ref{Ex_Sect5}, we checked that $-\ln(\xi_1)$ is exponentially distributed with parameter 1,
 and $\sum_{i=1}^n (-\ln(\xi_i))$, $n\in\NN$, is Gamma distributed with parameters $n$ and $1$.
Hence we have that
  \[
    \EE\left(\left( \sum_{i=1}^n (-\ln(\xi_i)) \right)^4\right)
        = \frac{\Gamma(n+4)}{\Gamma(n)} 
        = n(n+1)(n+2)(n+3),\qquad n\in\NN,
 \]  
 where $\Gamma$ denotes the Gamma function, and, using also \eqref{help25},
 \begin{align*}
  \EE ( ( \ln(\xi_1) - \EE(\ln(\xi_1)) )^4 )
    &= \EE ( ( \ln(\xi_1) + 1 )^4 )\\
    &= \EE ( ( \ln(\xi_1) )^4 ) + 4\EE ( ( \ln(\xi_1) )^3 ) + 6 \EE ( ( \ln(\xi_1) )^2 ) + 4 \EE ( \ln(\xi_1) ) + 1 \\
    &= 4! - 4\cdot 3! + 6\cdot 2! -4+1
    = 9.
 \end{align*} 
Therefore, in the considered special case, using also \eqref{help_Var_Exp}, the inequality \eqref{help_strong_alpha_pelda_1} takes the form
 \begin{align*}
  \frac{(n+1)(n+2)(n+3)}{n^3}
     & \leq \frac{n(n+1)(n+2)}{(n-1)^3}
       - \frac{1}{(n-1)^3n^3} \Big( 9(n^2 -3n+3) + 3(2n-3)\Big) \\
     & = \frac{n(n+1)(n+2)}{(n-1)^3}
        - \frac{1}{(n-1)^3n^3} \Big( 9n^2 -27n + 27 + 6n -9\Big)\\
     &  = \frac{n(n+1)(n+2)}{(n-1)^3}
           - \frac{3}{(n-1)^3n^3}(3n^2 -7n +6),
           \qquad n\in\NN\setminus\{1\}.
 \end{align*}
This inequality can also be verified directly. Indeed, multiplying both sides of the previous inequality by $(n-1)^3n^3$, we arrive at
  \[
    (n-1)^3(n+1)(n+2)(n+3) \leq n^4(n+1)(n+2) - 3(3n^2-7n+6), \qquad n\in\NN\setminus\{1\}.
  \]
By some algebraic calculations, we can conclude that the inequality \eqref{help_strong_alpha_pelda_1}
 is equivalent to 
 \begin{align*}
  6n^4 + 10n^3 -18n^2 + 14n -12  
  = 2(n-1)(3n^3 + 7n^2 + n(n-1) +6)\geq 0, \qquad n\in\NN\setminus\{1\},
 \end{align*}
 which is satisfied, since all the three terms of the product on the left hand side are positive for each $n\in\NN\setminus\{1\}$.
\proofend
\end{Ex}

\section*{Data availability statements}
Data sharing is not applicable to this article as no datasets were generated or analyzed during the current study.

\section*{Declaration of competing interest}
The authors declare that they have no conflicts of interest.

\bibliographystyle{plain}

\end{document}